\documentclass{article} 
\usepackage{graphicx}

\usepackage{physics}

\usepackage{amsmath}
\usepackage{amssymb}
\usepackage{amsfonts}

\usepackage{mathtools}

\usepackage{amsthm}

\usepackage{mathrsfs}

\usepackage{mleftright}

\usepackage{bussproofs}
\usepackage{tikz}\usetikzlibrary{cd}
\usetikzlibrary{arrows.meta}

\usepackage{enumitem}

\usepackage{comment}

\mleftright

\newcommand{\defeq}{\mathrel{\mathop{:}}=}

\newcommand{\complex}{\mathbf{C}}

\newcommand{\projsp}{\mathbf{P}}

\DeclareMathOperator{\id}{id}

\DeclareMathOperator{\Aut}{Aut}
\DeclareMathOperator{\GL}{GL}

\DeclareMathOperator{\PGL}{PGL}

\theoremstyle{definition}
\newtheorem{dfn}{Definition}[section]

\theoremstyle{plain}
\newtheorem{proposition}[dfn]{Proposition}
\newtheorem{lemma}[dfn]{Lemma}
\newtheorem{theorem}[dfn]{Theorem}

\theoremstyle{remark}
\newtheorem{remark}[dfn]{Remark}

\title{On the $K3$ surface with $\mathfrak{S}_4 \times \mathfrak{S}_4$ action}
\author{HAYATO NUKUI}
\date{February 21, 2026}
\begin{document}

\maketitle

\begin{abstract}
    By a lattice theoretic approach, Brandhorst--Hashimoto has made the list of K3 surfaces with finite groups of automorphisms which properly contain a maximal symplectic automorphism group. We give $3$ different explicit descriptions to the $K3$ surface with an action of $\mathfrak{S}_4\times \mathfrak{S}_4$, with various characterizations, and construct an explicit isomorphism to the Schur's quartic. We also calculate the intersection of the two polarization-preserving finite automorphism groups.
\end{abstract}

\section{Introduction}

An algebraic surface $S$ over an algebraically closed field $k$ is called $K3$ \textit{surface} if its canonical class is trivial and $\dim H^1(S,\mathcal{O}_S)=0$. 
An automorphism of a $K3$ surface $S$ is called \textit{symplectic} if it acts trivially on $H^0(S,\omega_S)$. Nikulin \cite{nikulin1979finite} classifies the symplectic action of finite abelian group on complex $K3$ surface and Mukai \cite{mukai1988finite} shows that in general a finite symplectic automorphism group of a complex $K3$ surface is a subgroup of one of $11$ maximal groups.
Brandhorst and Hashimoto \cite{Brandhorst2021ExtensionsOM} classifies complex $K3$ surfaces with finite groups of automorphisms which are proper extension of the maximal symplectic finite automorphism groups. The classification is lattice-theoretical and some projective models of such $K3$ surfaces are not determined yet.

In this paper we construct one with a faithful action of $\mathfrak{S}_4\times\mathfrak{S}_4$ of invariant polarization degree $24$ (78c in classification of \cite{Brandhorst2021ExtensionsOM}), which is known to be unique. 

Brandhorst and Hashimoto determine its transcendental lattice and it is isomorphic to those of known $K3$ surfaces with remarkable properties. Shioda--Inose correspondence immediately implies that those surfaces are isomorphism abstractly over $\complex$. We also construct concrete expressions of isomorphisms between them in positive characteristics.
\begin{theorem}[main theorem]\label{promain1}
    \begin{enumerate}
        \item The polarized complex $K3$ surface with a faithful action of $\mathfrak{S}_4\times\mathfrak{S}_4$ admits a projective model in $\projsp^1\times\projsp^1\times\projsp^1\times\projsp^1$ given by
    \[
        S=\left\{
        \begin{array}{l}
            s tuv+1=0 \\
            st+uv+su+tv+sv+tu=0,
        \end{array}
        \right.
    \]
    where $s$, $t$, $u$ and $v$ are the inhomogeneous coordinates of $\projsp^1\times\projsp^1\times\projsp^1\times\projsp^1$.

    The action of $\mathfrak{S}_4\times\mathfrak{S}_4$ is generated by the permutations of the coordinates and the diagonal actions of 
    \[
        \alpha=\begin{pmatrix}
            \zeta_8&0 \\ 0&\zeta_8^7
        \end{pmatrix},
        \;\beta=\sqrt{\frac{-1}{2}}
        \begin{pmatrix}
            1&1\\1&-1
        \end{pmatrix}\in \PGL_2(\complex).
    \]
    \item Suppose $\operatorname{ch} k\neq 2,3$. $S$ is isomorphic to:
    \begin{itemize}
        \item the quartic surface with a projective action of the group of \textrm{GAP} Id $[1152, 157515]$
            \[
                S_{T_{192}}\defeq (x^4+y^4+z^4+w^4-2\sqrt{-3}(x^2y^2+z^2w^2)=0)\subset\projsp^3, 
            \]
        \item the Schur's quartic
            \[
                S_{Sch}\defeq (x^4-xy^3=z^4-zw^3)\subset\projsp^3
            \]
        \item the Kummer surface associated with the direct product of two elliptic curves
            \[
                E_1=\left(y_1^2=x(x_1-1)\left(x_1-\frac{1+\sqrt{-3}}{2}\right)\right)
            \]
            and
            \[
                E_2=\left(y_1^2=(x_1^2-1)\left(x_1^2-\frac{1+\sqrt{-3}}{2}\right)\right).
            \]
    \end{itemize}
    \label{promain12}
    \end{enumerate}
\end{theorem}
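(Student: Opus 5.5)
For part (1), I will first check directly that $S$ is a $K3$ surface, then check the group action, and then invoke uniqueness. $S$ is a complete intersection of two divisors of multidegree $(1,1,1,1)$ in $(\projsp^1)^4$. The anticanonical class of $(\projsp^1)^4$ is $(2,2,2,2)$, so adjunction gives $\omega_S\cong\mathcal{O}_S$. The Koszul resolution then gives $H^1(S,\mathcal{O}_S)=0$, provided $S$ is smooth, or at worst has only rational double points, which I will check with the Jacobian criterion; the symmetry cuts this down to few cases.

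Next I verify that the generators preserve both equations. Coordinate permutations obviously preserve them, since both equations are symmetric. The diagonal action of $\alpha$ multiplies each coordinate by $\zeta_8^2=i$. Hence $stuv$ is multiplied by $i^4=1$, and every $e_2$ term is multiplied by $-1$, so both equations are preserved. For $\beta$, $x\mapsto (x+1)/(x-1)$ is an involution. Substituting it and clearing denominators $\prod(x-1)$, the two equations should transform into linear combinations of themselves; this is a routine symmetric-polynomial computation in $e_1,\dots,e_4$.

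Then I identify the group. The diagonal image of $\langle\alpha,\beta\rangle$ in $\PGL_2$ is the octahedral group $\cong\mathfrak{S}_4$. Together with the coordinate permutations this gives $G\subset\PGL_2^{\Delta}\times\mathfrak{S}_4$. Faithfulness has to be checked, and I expect the two factors to commute, giving $\mathfrak{S}_4\times\mathfrak{S}_4$. The invariant polarization $\mathcal{O}(1,1,1,1)|_S$ has degree $4!\cdot(1\cdot1)\cdot(\text{coefficient})$: the intersection $(1,1,1,1)^2\cdot(1,1,1,1)^2$ equals $2\cdot 4!/(2!)\dots$, computed as the coefficient $24$, which matches the degree $24$ in the statement. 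The uniqueness result of Brandhorst--Hashimoto (78c) then identifies $S$ with the surface in the statement.

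For part (2), over $\complex$ it suffices to compute the transcendental lattice of each surface and apply Shioda--Inose. For arbitrary characteristic $\neq 2,3$, however, I will instead give explicit maps.
\begin{itemize}
\item \textbf{Kummer.} $E_2$ is a genus-one curve written as a double cover of $\projsp^1$ branched at four points. The product $E_1\times E_2$ modulo $-1$ maps to $\projsp^1\times\projsp^1$ branched along a $(4,4)$ curve. I plan to realize this through the projections $(s,t)$ and $(u,v)$ of $S$ (for example via $s+t$, $st$) and compare branch loci; the cross-ratios are determined by $\zeta_6$-type constants.
\item \textbf{Quartics.} I will find a degree-4 polarization on $S$, for instance from a $\mathfrak{S}_4\times\mathfrak{S}_4$-stable orbit of curves, and write the quartic model invariant under a large subgroup. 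Its equation should match $S_{T_{192}}$ after a linear change, and Schur's quartic by a further explicit linear transformation, since both have 48 lines.
\end{itemize}

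The main obstacle is constructing explicit degree-4 polarizations and verifying that the resulting maps are isomorphisms rather than just correspondences. I would first search among sums of fibers and components of the $24$ lines/conics on $S$ for classes with square $4$. Then I would compute the sections of the corresponding line bundle via multihomogeneous forms of low degree.
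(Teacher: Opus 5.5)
\paragraph{Part (1).} Your part (1) follows the paper's route: adjunction, Koszul and the Jacobian criterion; invariance of $\langle\sigma_4+1,\sigma_2\rangle$ under $\alpha$, $\beta$ and the permutations; $H^4=4!=24$; then uniqueness of case 78c. One step is missing. To invoke Brandhorst--Hashimoto you must show $G_s\cong\mathfrak{A}_{4,4}$, since that is what places $S$ in case 78c. The paper gets this from Mukai's theorem: $G/G_s$ is cyclic, so $G_s$ is one of $\mathfrak{S}_4\times\mathfrak{S}_4$, $\mathfrak{S}_4\times\mathfrak{A}_4$, $\mathfrak{A}_4\times\mathfrak{S}_4$, $\mathfrak{A}_{4,4}$, and only the last lies in Mukai's list.

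\paragraph{Part (2): the genuine gap.} The real gap is part (2) in characteristic $p\neq 2,3$. There Shioda--Inose is unavailable, and the explicit isomorphisms are the whole content; over $\complex$ your reduction to transcendental lattices is the paper's argument. You describe a search rather than a construction, and the concrete guidance you give would not work.
\begin{itemize}
\item A $\mathfrak{S}_4\times\mathfrak{S}_4$-stable orbit of curves cannot give a degree-$4$ class. Already over $\complex$, $NS(S)^{\mathfrak{A}_{4,4}}$ has rank one because $\mathfrak{A}_{4,4}$ is maximal symplectic. So every invariant class is a rational multiple of $h_{24}$, with square $24r^2\neq 4$. Indeed, the paper's Section 4 shows $h_4$ is fixed only by a subgroup of order $96$.
\item $S_{T_{192}}$ and $S_{Sch}$ contain $64$ lines, not $48$. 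In any case, equal line counts would not give you a linear change of coordinates.
\item For the Kummer structure, the projection $(s,t)\colon S\to\projsp^1\times\projsp^1$ is the quotient by the transposition $u\leftrightarrow v$. Its branch curve is $(s^2t^2-1)^2+4st(s+t)^2=0$, an irreducible six-nodal $(4,4)$-curve (the image of the fixed curve $\{u=v\}$). It is not four vertical plus four horizontal lines, so ``compare branch loci'' fails as stated.
\end{itemize}

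\paragraph{The missing idea.} The paper builds the map from the quartic side.
\begin{itemize}
\item On $S_{Sch}$, the factorizations $x^4-xy^3=x(x-y)(x-\omega y)(x-\omega^2y)$ and $z^4-zw^3=z(z-w)(z-\omega^2w)(z-\omega w)$ give $16$ lines $\ell_{ij}$.
\item The pencils of planes through the four pairwise disjoint lines $\ell_{11},\dots,\ell_{44}$ are four elliptic fibrations. Their product is $\varphi=([x:-z],[x-y:z-w],[x-\omega y:z-\omega^2w],[x-\omega^2y:z-\omega w])$.
\item The product of the four coordinates is $-(x^4-xy^3)/(z^4-zw^3)=-1$, and a direct expansion gives $\sigma_2=0$. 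Solving the linear relations among the coordinates gives an explicit inverse $\psi$. Numerically, $h_{24}=4h_4-\sum_i\ell_{ii}$.
\item The isomorphism $S_{T_{192}}\cong S_{Sch}$ is $\mathrm{diag}(M,\zeta_8M)$. Here $M$ is a M\"obius map carrying the (equianharmonic) roots of $x^4-2\sqrt{-3}x^2+1$ to those of $x^4-x$.
\item The Kummer statement goes through the isogeny $(E_2\times E_2)/\langle\tau'\times\tau'\rangle\cong E_1\times E_2$, together with Mizukami's explicit Kummer model of $S_{T_{192}}$.
\end{itemize}
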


Using this isomorphism between $S$ and $S_{Sch}$, we determine the intersection of two polarization-preserving automorphism groups in the automorphism group.

\begin{theorem}\label{promain2}
    Let $h_{24}$ and $h_4$ be the class of the polarization of $S$ and $S_{Sch}$, respectively. The intersection of $\Aut(S,h_{24})$ and $\Aut(S,h_4)$ in $\Aut(S)$ is isomorphic to $C_4\rtimes \mathfrak{S}_4$ and its symplectic subgroup is isomorphic to $C_2\times\mathfrak{S}_4$.
\end{theorem}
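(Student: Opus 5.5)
Theorem \ref{promain2} follows from the explicit isomorphism $\varphi\colon S\to S_{Sch}$ of Theorem \ref{promain1}(2), so the plan is to turn the statement into a finite group-theoretic computation. First, $\Aut(S,h_{24})$ is the finite group $G$ of Brandhorst--Hashimoto type 78c containing $\mathfrak{S}_4\times\mathfrak{S}_4$. By Theorem \ref{promain1}(1) and the uniqueness of 78c, I expect $G$ to be generated by coordinate permutations, the diagonal actions of $\alpha,\beta$, and one non-symplectic element (for instance an involution or order-$4$ map such as $s\mapsto -s$ composed with a suitable twist). Second, $\Aut(S_{Sch},h_4)$ is the projective linear automorphism group $H$ of the Schur quartic. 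It is classically known to have order $1152\cdot$(small factor); it is generated by $z\mapsto\zeta_3 z$-type diagonal maps, the symmetry $(x,y)\leftrightarrow(z,w)$, the tetrahedral symmetries of the binary form $x^4-xy^3$, and non-symplectic scalings.

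The key step is to compute $\varphi^* h_4$ in $\operatorname{NS}(S)$. Concretely, I pull back the hyperplane class through the explicit formulas for $\varphi$ and express it as a combination of the four rulings $f_1,\dots,f_4$ of $\projsp^1\times\projsp^1\times\projsp^1\times\projsp^1$, the $(-2)$-curves, and so on. The intersection is then
\[
\Aut(S,h_{24})\cap\Aut(S,h_4)=\{g\in G : g^*(\varphi^*h_4)=\varphi^*h_4\},
\]
which is the stabilizer in $G$ of a single class. This is computed by letting the explicit generators of $G$ act on $\operatorname{NS}(S)$, or more practically on a generating set of curves. Equivalently, I can conjugate $G$ by $\varphi$ and test which elements become linear on $\projsp^3$. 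Linearity holds exactly when the element maps the pulled-back hyperplane sections, i.e.\ the linear system $\varphi^*|\mathcal{O}(1)|$, to itself. So I will write $\varphi^{-1}$ on coordinate functions and check, generator by generator, which elements of $G$ preserve the $4$-dimensional space of sections spanned by $\varphi^*x,\varphi^*y,\varphi^*z,\varphi^*w$.

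Once the stabilizer is found, I identify it. I compute its order, expected $96$, and verify that it has a normal $C_4$ (likely generated by a non-symplectic element acting on $H^0(\omega)$ by $\sqrt{-1}$) with quotient $\mathfrak{S}_4$. This gives $C_4\rtimes\mathfrak{S}_4$. The symplectic subgroup is the kernel of the character on $H^0(S,\omega_S)$, of index $2$. It contains the square of the $C_4$ generator, which should be central, and projects onto $\mathfrak{S}_4$, giving $C_2\times\mathfrak{S}_4$. A consistency check is that this group of order $48$ must appear in Mukai's list as a subgroup of both $\mathfrak{S}_4\times\mathfrak{S}_4$-symplectic part and of $T_{192}$ or the Schur quartic's symplectic group.

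The main obstacle is computing $\varphi^*h_4$ and the stabilizer. The isomorphism $\varphi$ passes through intermediate models, and the degree-$24$ and degree-$4$ classes are not related by any obvious symmetry. My first attempt would be the direct test on sections. If that is too messy, I would instead determine the orbit of $\varphi^*h_4$ under $G$ and use orbit–stabilizer. The orbit is computed by finding all degree-$4$ classes $h$ with $h\cdot h_{24}$ equal to the value for $\varphi^*h_4$, among images under the generators.
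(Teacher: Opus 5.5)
Your framework is sound. The intersection is the stabilizer of the pulled-back class $\psi^*h_4$ inside $\Aut(S,h_{24})$ (the paper's $\psi\colon S\to S_{Sch}$ is your $\varphi$), and an element preserves $h_4$ iff it becomes projectively linear on $S_{Sch}$. This mirrors the paper, which instead computes the stabilizer of $h_{24}$ inside the explicit group $\Aut(S,h_4)\cong T_{192}\rtimes C_6$, of order exactly $1152$.

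\textbf{The stabilizer computation.} The proposal stops where the proof begins, and the one concrete procedure you give does not compute a stabilizer. Testing the generators of $G$ one at a time only yields the subgroup generated by those generators that happen to fix $\psi^*h_4$; products of non-fixing generators can fix it. You would need the whole orbit of $\psi^*h_4$ (it has $576/96=6$ elements), or a candidate subgroup together with a proof that its index is exactly $6$.

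The paper makes this a finite combinatorial problem as follows:
\begin{enumerate}
\item Since $\psi(C_{ij})=\ell_{ij}$, the class $h_{24}$ is the class of $C=\sum_{i\neq j}\ell_{ij}$.
\item One has $(\ell_{11}.C)=6$, whereas $(\ell_{11}.g(C))\le -2+6=4$ whenever $\ell_{11}$ is a component of $g(C)$. Hence an element of $\Aut(S,h_4)$ preserves $h_{24}$ iff it permutes $\{\ell_{ii}\}$.
\item $\Aut(S,h_4)$ permutes the eight planes $f_{mn}=0$. So the intersection is the preimage of the stabilizer of the pairing $\{1,5\},\{2,6\},\{3,7\},\{4,8\}$ in $\mathfrak{S}_8$, and generators can be read off.
\end{enumerate}
If you want to stay in your direction, the analogous foothold is the hyperplane section $x=0$. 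It gives $\psi^*h_4=[\psi^{-1}(\ell_{11})+C_{12}+C_{13}+C_{14}]$, where $\psi^{-1}(\ell_{11})=\{(-\lambda^{-3},\lambda,\omega^2\lambda,\omega\lambda)\}$.

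\textbf{The structural identification.} Several predictions here are wrong.
\begin{itemize}
\item $\Aut(S,h_{24})$ is exactly $\mathfrak{S}_4\times\mathfrak{S}_4$. No extra non-symplectic generator is needed; the diagonal action of $\alpha$ is already non-symplectic.
\item This group acts on $H^0(S,\omega_S)$ through $(\mathfrak{S}_4\times\mathfrak{S}_4)/\mathfrak{A}_{4,4}\cong C_2$. So no element of the intersection can act by $\sqrt{-1}$, and your guess contradicts your own (correct) claim that the symplectic subgroup has index $2$.
\item The normal $C_4$ is in fact generated by the diagonal action of $\alpha$, i.e.\ $s\mapsto\sqrt{-1}s$ on all four factors. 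This corresponds to $\operatorname{diag}(\sqrt{-1}E,E)$ on $S_{Sch}$, which has determinant $-1$ and leaves the equation invariant, so it acts on the $2$-form by $-1$. Its square is the central symplectic involution.
\item A central $C_2$ with quotient $\mathfrak{S}_4$ does not force $C_2\times\mathfrak{S}_4$. $\GL_2(\mathbf{F}_3)$, the binary octahedral group and $\mathfrak{A}_4\rtimes C_4$ are the other central extensions. You must exhibit a splitting, for instance by checking that a transposition and a double transposition of the quotient both lift to involutions.
\end{itemize}
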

Here, $\Aut(S,h)$ is the subgroup of $\Aut(S)$ which fixes the polarization class $h$.

\subsubsection*{Contents of the paper}In section 2 we recall basic facts of $K3$ surfaces and introduce notations from group theory. Section 3 contains construction of the surface, proof of the main theorem and various characterizations of this surface. In section 4 we calculate the intersection of the two polarization-preserving automorphism groups.

\subsubsection*{Notations}

$\mathfrak{S}_n$ and $\mathfrak{A}_n$ denote the symmetric group and the alternating group of degree $n$, respectively.. $Q_8$ is the quaternion group of order $8$ and $C_n$ is the cyclic group of order $n$. 

We assume that the characteristics of $k$ is not equal to $2$ or $3$ unless otherwise stated.

We may display a lattice in terms of its Gram matrix.

\subsubsection*{Acknowledgement} 

The author thanks Hisanori Ohashi for suggesting this problem, Yuya Matsumoto and Kenji Hashimoto for comments, and his supervisor Yuji Odaka for comments and encouragements.

\section{Preliminaries}

\subsection{Binary octahedral group and binary tetrahedral group}

We introduce some known facts from group theory. Let $\zeta_m$ denote a primitive $m$th root of unity and let $\omega$ be a primitive cube root of unity.

\begin{dfn}
    Suppose that $\operatorname{ch} k\neq 2$. 
    \textit{Binary octahedral group} $BO$ is the subgroup of $\GL_2(k)$ generated by two elements
\[
    \alpha=\begin{pmatrix}
        \zeta_8&0 \\ 0&\zeta_8^7
    \end{pmatrix},
    \;\beta=\sqrt{\frac{-1}{2}}
    \begin{pmatrix}
        1&1\\1&-1
    \end{pmatrix}.
\]
The quotient by its center 
\[
    \left\{\begin{pmatrix}
        1&0\\0&1
    \end{pmatrix},\begin{pmatrix}
        -1&0\\0&-1
    \end{pmatrix}\right\}
\]
is called \textit{octahedral group} $O$. $O$ is isomorphic to the symmetric group $\mathfrak{S}_4$ of degree $4$.
\end{dfn}

By calculation by GAP\cite{GAP4}, we have
\begin{proposition}\label{character}
    The character table of $BO$ is:
    \[
    \begin{array}{c|rrrrrrrr}
  \rm class&\rm1&\rm2&\rm3&\rm4A&\rm4B&\rm6&\rm8A&\rm8B\cr
  \rm size&1&1&8&6&12&8&6&6\cr
\hline
  \rho_{1}&1&1&1&1&1&1&1&1\cr
  \rho_{2}&1&1&1&1&-1&1&-1&-1\cr
  \rho_{3}&2&2&-1&2&0&-1&0&0\cr
  \rho_{4}&2&-2&-1&0&0&1&\sqrt{2}&-\sqrt{2}\cr
  \rho_{5}&2&-2&-1&0&0&1&-\sqrt{2}&\sqrt{2}\cr
  \rho_{6}&3&3&0&-1&-1&0&1&1\cr
  \rho_{7}&3&3&0&-1&1&0&-1&-1\cr
  \rho_{8}&4&-4&1&0&0&-1&0&0\cr
\end{array}
\]
$\alpha$ is in the conjugacy class $4B$ and $\beta$ is in $8A$.
\end{proposition}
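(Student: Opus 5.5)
The plan is to recompute the table directly from the explicit matrices, not just quote \textrm{GAP}, and then read off the classes of $\alpha$ and $\beta$ from their character values.

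\textbf{Step 1: structure of $BO$.} Compute
\[
\beta^2=-\tfrac12\begin{pmatrix}2&0\\0&2\end{pmatrix}=-I
\qquad\text{and}\qquad
\alpha^4=\operatorname{diag}(\zeta_8^4,\zeta_8^{28})=-I .
\]
Hence the image of $BO$ in $\PGL_2(k)$ is generated by an element of order $4$ (the image of $\alpha$) and an element of order $2$ (the image of $\beta$). Verify the octahedral relation $(\bar\alpha\bar\beta)^3=1$. Then $|O|=24$, $|BO|=48$, $Z(BO)=\{\pm I\}$ and $O\cong\mathfrak S_4$. The sublattice $BO\cap\mathrm{SL}$-type subgroup lying over $\mathfrak A_4$ is the binary tetrahedral group $BT$ of order $24$, and it has index $2$ in $BO$.

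\textbf{Step 2: conjugacy classes.} Lift the classes of $\mathfrak S_4$ through the central extension $\{\pm I\}\to BO\to\mathfrak S_4$.
\begin{itemize}[label={}]
\item The identity class lifts to the classes $1$ and $2$ (the latter is $\{-I\}$).
\item The $3$-cycles lift to the class $3$ (size $8$) and the class $6$ (size $8$).
\item The double transpositions lift to a single class $4A$ of size $6$, inside $BT$.
\item The transpositions lift to a single class $4B$ of size $12$, outside $BT$.
\item The $4$-cycles lift to two classes $8A$ and $8B$ of size $6$ each. They are distinguished by the trace $\pm\sqrt2$ in the natural representation.
\end{itemize}

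\textbf{Step 3: the characters.}
\begin{itemize}[label={}]
\item $\rho_1,\rho_2,\rho_3,\rho_6,\rho_7$ are pulled back from $\mathfrak S_4$. They are, respectively, the trivial character, the sign, the $2$-dimensional representation, the standard representation, and the standard representation tensored with the sign.
\item $\rho_4$ is the natural representation. Its traces are $2,-2,-1,0,0,1,\pm\sqrt2$.
\item $\rho_5=\rho_4\otimes\rho_2$.
\item $\rho_8=\rho_4\otimes\rho_3$. Check irreducibility via $\langle\rho_8,\rho_8\rangle=1$.
\end{itemize}
The squares of the degrees sum to $48$, so the list of irreducible characters is complete.

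\textbf{Step 4: the classes of $\alpha$ and $\beta$.} This step is the main obstacle, and the check has to be done honestly. By Step 1, $\beta$ has order $4$ and lies over a transposition. A direct computation, however, gives $\operatorname{tr}\alpha=\zeta_8+\zeta_8^{-1}=\sqrt2$, and $\alpha$ has order $8$. The columns of the table are fixed by element orders: every element of $4B$ has order $4$, and every element of $8A$ has order $8$. Taken literally, this forces $\alpha\in 8A$ and $\beta\in 4B$, which is the opposite of the assignment in the statement.

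So I would first check whether the statement intends a different convention. The candidates are a different labelling of the \textrm{GAP} columns, or a different choice of $\alpha,\beta$ from the one used in Theorem~\ref{promain1}. If neither convention rescues it, the proof cannot establish the class assignment as worded, and the last sentence of the statement should be corrected to $\alpha\in 8A$ and $\beta\in 4B$. The table itself is unaffected either way.
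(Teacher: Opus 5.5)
Your Step 4 is right, and no convention rescues the statement as worded. We have $\alpha^2=\operatorname{diag}(\sqrt{-1},-\sqrt{-1})\neq\pm I$ and $\alpha^4=-I$, so $\alpha$ has order $8$. Also $\beta^2=-I$, so $\beta$ has order $4$. The column labels record element orders, so the correct assignment is $\alpha\in 8A$ (its trace $\zeta_8+\zeta_8^{-1}=\sqrt2$ is the $\rho_4$-value there) and $\beta\in 4B$.

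The error propagates, but harmlessly. In the proof of Proposition~\ref{stables}, $\chi_W$ is the character of $\mathrm{Sym}^4$ of the natural representation, namely $(5,5,-1,1,1,-1,-1,-1)$. Reading $\chi(\alpha)=-1$ and $\chi(\beta)=1$ in the correct columns therefore gives $W\cong\rho_3\oplus\rho_7$, not $\rho_3\oplus\rho_6$. Only the summand $\rho_3$ is used there, so that conclusion survives.

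On the route: the paper's only justification is a GAP computation. You instead derive the table from the central extension $\{\pm I\}\to BO\to\mathfrak S_4$. You pull back five characters from $\mathfrak S_4$, obtain the faithful ones as $\rho_4$, $\rho_4\otimes\rho_2$, $\rho_4\otimes\rho_3$, and close the list with $\sum d_i^2=48$. This is longer, but it actually proves the table. More to the point, it locates the given matrices in the table, which is exactly what the citation got wrong.

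Three things to repair:

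(i) You have $\rho_6$ and $\rho_7$ interchanged. The standard representation of $\mathfrak S_4$ (permutation representation minus trivial) takes the value $+1$ on transpositions ($4B$) and $-1$ on $4$-cycles ($8A$, $8B$). That is the row $\rho_7$. The row $\rho_6$ is standard $\otimes$ sign, the rotation representation of the cube. As written, Step~3 outputs those two rows swapped.

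(ii) Step~1 only shows that $\bar\beta$ has order $2$. To place $\beta$ in $4B$ rather than $4A$, you need $\bar\beta$ to be a transposition. A parity argument gives this: $\bar\alpha$ has order $4$, so it is an odd $4$-cycle, and $\bar\alpha\bar\beta$ has order $3$, so it is even. Hence $\bar\beta$ is odd. Disproving the statement only needs the element orders; this step is needed for the positive correction.

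(iii) In Step~2, the claim that the lifts of transpositions form one class of size $12$ needs a reason. Note first that $BO\subset\mathrm{SL}_2(k)$, since $\det\alpha=\det\beta=1$. Now take lifts $\tilde t_1,\tilde t_2$ of two disjoint transpositions. If they commuted, then $(\tilde t_1\tilde t_2)^2=\tilde t_1^2\tilde t_2^2=I$, which in $\mathrm{SL}_2$ forces $\tilde t_1\tilde t_2=\pm I$. That is impossible, since $\tilde t_1\tilde t_2$ lies over a double transposition. Hence $\tilde t_2\tilde t_1\tilde t_2^{-1}=-\tilde t_1$, so each lift is conjugate to its negative.
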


\subsection{Preliminaries on $K3$ surfaces}

$K3$ surfaces on the list of Brandhorst and Hashimoto are singular, that is, their transcendental lattices are of rank $2$. The theorem below is a well-known result on singular $K3$ surfaces.

\begin{theorem}[{\cite[Theorem 4]{shioda1977singular}}]\label{shiodainose}
    There is a bijection between the set of singular $K3$ surfaces up to isomorphisms and the set of positive definite, even, oriented lattice of rank $2$ up to isomorphism which associates with a singular $K3$ surface its transcendental lattice.
    \begin{align*}
        \{\textrm{singular } K3 \textrm{ surfaces}\}/{\sim} \xrightarrow{1:1} \{\textrm{positive definite even oriented lattice of rank } 2\}/{\sim}
    \end{align*}
\end{theorem}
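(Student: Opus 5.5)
The plan is to split the statement into well-definedness, injectivity and surjectivity, and to handle them with three standard inputs: the Torelli theorem for $K3$ surfaces, Shioda--Mitani's description of singular abelian surfaces, and the Shioda--Inose construction.

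\textbf{Well-definedness and orientation.} For a singular $K3$ surface $X$, the transcendental lattice $T_X=\mathrm{NS}(X)^{\perp}\subset H^2(X,\mathbf{Z})$ has rank $22-20=2$. It is even because $H^2(X,\mathbf{Z})$ is even, and positive definite because the Hodge index theorem gives $\mathrm{NS}(X)$ signature $(1,19)$ inside a lattice of signature $(3,19)$. The orientation of $T_X$ is fixed by requiring $(\mathrm{Re}\,\omega,\mathrm{Im}\,\omega)$ to be a positive basis of $T_X\otimes\mathbf{R}$, where $\omega$ spans $H^{2,0}(X)$; this does not depend on the choice of $\omega$. An isomorphism $X\cong X'$ pulls $\omega'$ back to a multiple of $\omega$, so it induces an orientation-preserving isometry $T_{X'}\cong T_X$.

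\textbf{Injectivity.} Suppose $\phi\colon T_X\to T_{X'}$ is an orientation-preserving isometry. Since $T_X\otimes\complex=H^{2,0}\oplus H^{0,2}$ is spanned by $\omega$ and $\bar\omega$, the orientation condition forces $\phi_{\complex}$ to send $\omega$ to a multiple of $\omega'$ rather than of $\bar\omega'$. So $\phi$ is a Hodge isometry. The next step is to extend $\phi$ to an isometry $H^2(X,\mathbf{Z})\to H^2(X',\mathbf{Z})$. For this I would use Nikulin's theory. The orthogonal complements $\mathrm{NS}(X)$ and $\mathrm{NS}(X')$ have the same rank and signature, and their discriminant forms are both minus that of $T$. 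Since they are indefinite of rank $20\ge 2+\ell(A_T)$, they are unique in their genus, and $O(\mathrm{NS})\to O(q_{\mathrm{NS}})$ is surjective. Hence $\phi$ extends by gluing. Composing with an element of $O(\mathrm{NS})$, reflections in $(-2)$-classes, and $\pm1$, I would arrange that the extension maps an ample class to an ample class. The strong Torelli theorem of Piatetski-Shapiro--Shafarevich then gives $X\cong X'$.

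\textbf{Surjectivity.} This is the main obstacle. Let $T$ have Gram matrix $\begin{pmatrix}2a&b\\ b&2c\end{pmatrix}$.
\begin{enumerate}
\item Following Shioda--Mitani, take $\tau=\frac{-b+\sqrt{b^2-4ac}}{2a}$ and $\tau'=\frac{b+\sqrt{b^2-4ac}}{2}$, with $\sqrt{b^2-4ac}$ chosen in the upper half plane (note $b^2-4ac<0$). Put $E=\complex/(\mathbf{Z}+\tau\mathbf{Z})$ and $E'=\complex/(\mathbf{Z}+\tau'\mathbf{Z})$. A direct computation of $H^2(E\times E')$ and of the graphs of isogenies should show that the abelian surface $A=E\times E'$ has $T_A\cong T$ with the correct orientation.
\item The Kummer surface $\mathrm{Km}(A)$ only has $T_{\mathrm{Km}(A)}\cong T(2)$, so it does not suffice. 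Instead I would carry out the Shioda--Inose construction. On $\mathrm{Km}(E\times E')$ there is an elliptic fibration with two fibres of type $II^*$. Translating by a $2$-torsion section, or equivalently taking the double cover branched along a suitable divisor made from the two $II^*$ configurations, yields a $K3$ surface $X$ with a Nikulin involution $\iota$ such that $X/\iota$ is birational to $\mathrm{Km}(A)$.
\item Finally I would verify $T_X\cong T_A$, as in Morrison's criterion, where the involution induces multiplication by $2$ on transcendental lattices. It comes down to checking that $\pi_*\colon T_X\to T_{\mathrm{Km}(A)}=T_A(2)$ is an isometry $T_X(2)\cong T_A(2)$.
\end{enumerate}
The lattice bookkeeping in the last step, together with tracking the orientation through these correspondences, is where I expect most of the care to be required.
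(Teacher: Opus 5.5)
The paper gives no proof of this statement; it quotes Shioda--Inose. Your outline has the same architecture as their argument: Torelli for injectivity, and Shioda--Mitani plus a degree-two correspondence with $\mathrm{Km}(E\times E')$ for surjectivity. The injectivity part is fine. Reflections in $(-2)$-classes act trivially on $A_{\mathrm{NS}}$, so composing with $\pm W(\mathrm{NS})$ keeps the isometry extendable; a general element of $O(\mathrm{NS})$ would not.

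The genuine gap is Step~2 of surjectivity, and it is not a matter of bookkeeping: the cover you describe has the wrong transcendental lattice.

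\emph{There is no $2$-torsion section.} On a Jacobian fibration with two $\mathrm{II}^*$ fibres the Mordell--Weil group is torsion-free. In the height formula the $\mathrm{II}^*$ fibres contribute $0$, and the remaining fibres, of total Euler number $4$, contribute at most $1<4$. Moreover, a translation would produce a quotient of $\mathrm{Km}(A)$, not a cover.

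\emph{The branch divisor is forced.} The only nonzero $2$-divisible sum of disjoint components of the two $\mathrm{II}^*$ fibres is the sum of the eight odd-multiplicity components $c_0,c_2,c_4,c_8,c_0',c_2',c_4',c_8'$. Indeed, mod $2$ the span of a $\tilde E_8$ has radical generated by the fibre class, and $F$ is primitive. Half of this sum is $F-D_0-D_\infty$, which is trivial on the general fibre. So your double cover is just the quadratic base change $t=s^2$ of the fibration, turning the two $\mathrm{II}^*$ fibres into $\mathrm{IV}^*$ fibres.

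That base change is the wrong half of Shioda's Kummer sandwich. A K3 surface with a Jacobian fibration with two $\mathrm{II}^*$ fibres is an Inose surface $y^2=x^3+at^4x+t^5(b_0+b_1t+b_2t^2)$. Its quadratic base change is a Kummer surface (Inose's pencil, with two $\mathrm{IV}^*$ fibres) whose transcendental lattice is \emph{twice} that of the Inose surface. Hence for your cover $\pi^*T_{\mathrm{Km}(A)}=T_X$, so $T_X\cong T_A(4)$. Step~3 would then find $\pi_*T_X=2T_{\mathrm{Km}(A)}$ instead of $T_{\mathrm{Km}(A)}$.

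In a Shioda--Inose structure the roles are reversed. The two $\mathrm{II}^*$ fibres, i.e.\ two orthogonal copies of $E_8(-1)$ exchanged by $\iota$, live on the cover $X$. That exchange is exactly Morrison's criterion guaranteeing $\pi_*\colon T_X(2)\xrightarrow{\sim}T_{\mathrm{Km}(A)}$.

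You have two ways to repair this. One is to choose an even eight on $\mathrm{Km}(A)$ whose covering involution has that exchange property, and then invoke Morrison. The other is to bypass the construction entirely: embed $T$ primitively in $U^{\oplus 2}\subset\Lambda_{K3}$, take the isotropic period line in $T\otimes\complex$ with the correct orientation, and apply surjectivity of the period map.
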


Those $K3$ surfaces come with elliptic fibrations. This is a criterion for a divisor to be a fiber of an elliptic fibration.
\begin{theorem}[{\cite[Proposition 3.10]{Huybrechts_2016}}]\label{ellfiber}
    Assume characteristics of $k$ is not equal to $2$ or $3$ and let $D$ be an effective divisor on a $K3$ surface with $D^2=0$. If $D$ is nef and primitive, then there is a smooth elliptic curve such that $E\in |D|$.
\end{theorem}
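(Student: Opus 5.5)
The statement immediately above is Theorem~\ref{ellfiber}, quoted from Huybrechts. I would prove it by the standard Riemann--Roch and base-point-freeness route for $K3$ surfaces.

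\emph{Step 1: $|D|$ is a pencil.} Riemann--Roch on a $K3$ surface gives
\[
\chi(\mathcal{O}(D)) = 2 + D^2/2 = 2.
\]
Since $D$ is effective and nonzero, $h^2(D)=h^0(-D)=0$. Hence $h^0(D)\ge 2$, so $|D|$ is at least a pencil.

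\emph{Step 2: $|D|$ is base-point free.} Write $|D| = |M| + F$ with $F$ the fixed part and $M$ the moving part. Since $D$ is nef,
\[
0 = D^2 = D\cdot M + D\cdot F,
\]
and both terms are nonnegative, so $D\cdot M = D\cdot F = 0$. Also $M^2\ge 0$, as $M$ moves. Expanding $D\cdot M = M^2 + M\cdot F$ forces $M^2 = 0$ and $M\cdot F=0$. Then $F^2 = D^2 - 2M\cdot F - M^2 = 0$. By the Hodge index theorem, $D$ and $M$ are then numerically proportional. On a $K3$ surface numerical and linear equivalence agree, so primitivity of $D$ forces $M$ to be a multiple $kD$ of $D$ with $k\ge 1$; since $F$ is effective, $F=0$ and $M=D$. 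This fixed-part analysis is the step I expect to be the main obstacle and the one to write out carefully.

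With $F=0$, a moving class of square zero has no base points: two general members meet in $D^2=0$ points, so the members are disjoint. Hence $|D|$ defines a morphism $\phi\colon S\to\projsp^n$ with image a curve $C$.

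\emph{Step 3: $C=\projsp^1$ and the fibres are connected.} Take the Stein factorization $S\to C'\to C$. Since $H^1(S,\mathcal{O}_S)=0$, we get $C'\cong\projsp^1$. The pullback of a point of $C'$ is a class $E$ with $D = mE$. Primitivity gives $m=1$, so $C\cong\projsp^1$, the fibres are connected, and $D$ is the fibre class.

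\emph{Step 4: the general fibre is a smooth elliptic curve.} By adjunction, $2g(E)-2 = E^2 = 0$, so the fibres have arithmetic genus $1$. In characteristic $0$, generic smoothness makes the general fibre smooth. In characteristic $\ne 2,3$, the morphism $S\to\projsp^1$ cannot be quasi-elliptic, since quasi-elliptic fibrations occur only in characteristics $2$ and $3$. So the general fibre is again a smooth elliptic curve $E\in|D|$.
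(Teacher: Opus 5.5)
Your proof is correct. Riemann--Roch gives a pencil. Nefness, the Hodge index theorem and primitivity force the fixed part to vanish; the inequality $M\cdot F\ge 0$ there uses that the moving part is nef. Stein factorization together with $H^1(S,\mathcal{O}_S)=0$ and primitivity yields a fibration over $\projsp^1$ with fibre class $D$. Finally, excluding quasi-elliptic fibrations outside characteristics $2,3$ makes the general fibre a smooth elliptic curve. The paper gives no proof of its own and simply quotes Huybrechts, and your argument is essentially the standard proof given in that reference.
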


Mukai's classification of finite symplectic automorphism groups also applies to so-called tame case in positive characteristics. 
\begin{theorem}[{\cite{DolgKeum},\cite{mukai1988finite}}]\label{Mukai}
    Let $G$ be a finite group of symplectic automorphisms of a $K3$ surface. If characteristic $p=0$ or if $p$ is coprime to the order of $G$, $G$ is isomorphic to a subgroup of the Mathieu group $M_{23}$ which has at least $5$ orbits in its natural permutation action on the set of $24$ elements, that is, a subgroup of one of $11$ groups
    \[
        T_{48}, N_{72}, M_9, \mathfrak{S}_5, L_2(7), H_{192}, T_{192}, \mathfrak{A}_{4,4}, \mathfrak{A}_6, F_{384}, \text{and}\, M_{20}
    \]
\end{theorem}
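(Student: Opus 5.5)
The plan is to follow Mukai's original strategy, a character computation followed by group theory, and to handle the tame positive-characteristic case as Dolgachev--Keum do, replacing Hodge theory by $\ell$-adic cohomology with $\ell\neq p$. Throughout, $X$ is the $K3$ surface and $G$ acts symplectically with $p\nmid |G|$ (or $p=0$).

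\textbf{Step 1: local structure of a single element.} Let $g\in G$ have order $n$. Since $g$ preserves the $2$-form and $n$ is prime to $p$, at each fixed point $g$ acts on the tangent space by a matrix $\operatorname{diag}(\zeta,\zeta^{-1})$ with $\zeta\neq 1$. Hence the fixed points are isolated, and $X/\langle g\rangle$ has only rational double points of type $A$. Its minimal resolution is again a $K3$ surface.
\begin{itemize}
\item In characteristic $0$ this uses the holomorphic Lefschetz formula.
\item In the tame case one first checks that the quotient has trivial canonical class and vanishing $H^1(\mathcal{O})$, using the fact that the cover is tame.
\end{itemize}
Combining the holomorphic Lefschetz formula (or its tame \'etale analogue, via Euler characteristic counts on the resolved quotient) with the topological Lefschetz formula yields the following.
\begin{itemize}
\item The order satisfies $n\le 8$.
\item The number of fixed points is $\varepsilon(n)=24/\bigl(n\prod_{q\mid n}(1+1/q)\bigr)$, that is, $8,6,4,4,2,3,2$ for $n=2,\dots,8$.
\end{itemize}

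\textbf{Step 2: the Mathieu character.} Let $\chi$ be the character of $G$ on the total cohomology $H^*_{\mathrm{et}}(X,\mathbf{Q}_\ell)$, which has rank $24$. By the Lefschetz trace formula, $\chi(g)=\varepsilon(\operatorname{ord} g)$, and in particular $\chi$ is rational and independent of the choice of $\ell$. This is exactly the permutation character of $M_{23}$ on $24$ points, restricted along any embedding.

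Next, the multiplicity of the trivial character in $\chi$ is at least $5$. The invariant classes include:
\begin{itemize}
\item $H^0$ and $H^4$;
\item the class of a $G$-invariant ample divisor, obtained by averaging;
\item in characteristic $0$, the real $2$-plane spanned by $\operatorname{Re}$ and $\operatorname{Im}$ of $H^{2,0}$, since $G$ is symplectic.
\end{itemize}
In positive characteristic, Hodge theory is replaced by the fact that the coinvariant lattice $(H^2)_G$ lies in $NS$ and is negative definite, with rank at most $22-3$. This comparison between $NS$ and the transcendental part (via crystalline cohomology for the Picard bound) is what forces the two extra invariants. Equivalently, $\frac{1}{|G|}\sum_g\varepsilon(\operatorname{ord} g)\ge 5$.

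\textbf{Step 3: group theory.} This step is purely group-theoretic and is where I expect the main obstacle. One shows that a finite group $G$ satisfies the two conditions above if and only if $G$ embeds in $M_{23}$ with at least $5$ orbits. The two conditions are:
\begin{itemize}
\item the class function $g\mapsto\varepsilon(\operatorname{ord} g)$ is a character;
\item this character has trivial multiplicity at least $5$.
\end{itemize}
Mukai's route is a case analysis over Sylow subgroups and orders bounded by the mass formula, ending with the explicit list of $11$ maximal groups.

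I would rather use Kond\=o's lattice argument. The coinvariant lattice $L_G=(H^2)^{\perp}_{G}$ is negative definite of rank at most $19$ and contains no $(-2)$-vectors, because a $G$-invariant ample class exists. It therefore embeds, $G$-equivariantly, into a Niemeier lattice $N$ with $\operatorname{rank}L_G+\operatorname{rank}(\text{its complement})=24$. Here $G$ fixes a Weyl chamber, so $G$ acts on the Dynkin diagram (for $N$ the Leech lattice, this is replaced by an argument on $\operatorname{Co}_0$). This embeds $G$ into a permutation group on the $24$ root-components, and the bound of at least $5$ orbits comes from the rank count.

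Once $G\subset M_{23}$ with at least $5$ orbits, enumerating maximal such subgroups (a finite computation, e.g.\ in GAP) gives the listed $11$ groups.
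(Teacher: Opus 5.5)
The paper gives no proof of this statement. It is quoted from Mukai (characteristic $0$) and Dolgachev--Keum (tame characteristic $p$), so the comparison has to be with those sources. In characteristic $0$ your outline is the standard one. The genuine gap is Step~2 in positive characteristic, and it propagates back to Step~1.

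You obtain the two extra invariant classes from ``the comparison between $NS$ and the transcendental part (via crystalline cohomology for the Picard bound)''. For a supersingular $K3$ surface, however, $\rho=22$. There is no transcendental part, and nothing in your argument produces an invariant class in $H^2$ other than the ample one. The coinvariant lattice could a priori have rank $20$ or $21$, so $\langle\chi,1\rangle\ge 5$ does not follow.

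The bound $n\le 8$ falls with it. For an element of prime order $q$ with $k$ fixed points, the Euler-characteristic count on the resolved quotient gives only $k(q+1)=24$, i.e.\ $q\in\{2,3,5,7,11,23\}$. Order $23$ is killed by the invariant ample class, since $\operatorname{tr}(g|H^2)=-1$ forces $(H^2)^g=0$. For order $11$, however, the trace computation leaves an invariant subspace of $H^2$ of rank exactly $2$, and this is excluded only by the same ``three invariant classes'' input.

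Even in finite height $h$, being symplectic does not by itself put the coinvariants inside $NS\otimes\mathbf{Q}_\ell$. You must show that $G$ acts trivially on the formal Brauer group. This works because automorphisms of order prime to $p$ of a one-dimensional formal group of finite height are detected on its tangent space $H^2(X,\mathcal{O}_X)$. That argument is what yields $2h\ge 2$ invariant dimensions outside $NS$.

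This is not a presentational omission. Dolgachev--Keum need a separate argument for supersingular surfaces. Roughly, the $p$-elementary discriminant group $(\mathbf{Z}/p)^{2\sigma}$ of $NS$ has to be carried by the invariant lattice, which gives enough invariant classes only when the Artin invariant $\sigma$ is at least $2$. Their theorem in the tame case therefore carries an additional hypothesis: $X$ or the minimal resolution of $X/G$ is not a supersingular $K3$ surface with Artin invariant $1$. Without it they can only list possible additional groups. So the statement as quoted here is stronger than what is proved in the literature. Your argument cannot establish it as written; you must add that hypothesis and give a genuine supersingular argument.

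There is also a smaller problem in Step~3. Only the Niemeier lattice with root system $A_1^{24}$ has $24$ root components. Kond\=o forces a root into the orthogonal complement precisely so that the Leech lattice never occurs, and the passage to $M_{23}$ requires a separate analysis of the Niemeier lattices. In characteristic $p$ your $L_G$ lives in the non-unimodular lattice $NS$, so the discriminant-length bound needed for Nikulin's embedding theorem has to be checked prime by prime. Dolgachev--Keum avoid this by verifying Mukai's character-theoretic conditions and then applying Mukai's group theory unchanged.
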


Later we will use groups $\mathfrak{A}_{4,4}$ and $T_{192}$ from the Mukai's classification.

The group $\mathfrak{A}_{4,4}$ is the intersection of $\mathfrak{A}_8$ and the image of the natural embedding $\mathfrak{S}_4\times \mathfrak{S}_4$ into $\mathfrak{S}_8$. 
    
Let $F$ be the central product $Q_8\ast Q_8$. For two automorphisms $\alpha_1$, $\alpha_2$ of $Q_8$, we denote by $\alpha_1\ast\alpha_2$ the automorphism $F\ni x\ast y\mapsto \alpha_1(x)\ast\alpha_2(y)$. $T_{192}$ is the semi-direct product $(Q_8\ast Q_8)\rtimes\langle\alpha^{-1}\ast\alpha,\tau\rangle$, where $\alpha$ is an automorphism of order $3$ and $\tau$ is the factor-change involution. $T_{192}$ is isomorphic to the centralizer of a permutation of type $(2)^4$ in $\mathfrak{A}_8$.

We recall some $K3$ surfaces with a prominent property. 

The quartic surface $S_{T_{192}}$ defined by
\[
    x^4+y^4+z^4+w^4-2\sqrt{-3}(x^2y^2+z^2w^2)=0
\]
admits an action by $T_{192}$ \cite{mukai1988finite}. The action of $Q_8\ast Q_8$ is generated by
\[
    \begin{pmatrix}
        I&0 \\
        0&E
    \end{pmatrix},
    \begin{pmatrix}
        J&0 \\
        0&E
    \end{pmatrix},
    \begin{pmatrix}
        E&0 \\
        0&I
    \end{pmatrix},
    \begin{pmatrix}
        E&0 \\
        0&J
    \end{pmatrix}\in\PGL_4(k),
\]
where
\[
    E=\begin{pmatrix}
        1&0\\0&1
    \end{pmatrix},
    I=\begin{pmatrix}
        0&1\\-1&0
    \end{pmatrix},
    J=\begin{pmatrix}
        \sqrt{-1}&0\\0&-\sqrt{-1}
    \end{pmatrix}.
\]
The action by $\alpha^{-1}\ast\alpha$ is
\[
    \frac{1+\sqrt{-1}}{2}
    \begin{pmatrix}
        \omega&\omega&0&0\\
        -\zeta_{12}&\zeta_{12}&0&0\\
        0&0&\zeta_{12}^5&-\omega^2\\
        0&0&\zeta_{12}^5&\omega^2
    \end{pmatrix}
\]
and the action by $\tau$ is
\[
    \begin{pmatrix}
        0&E \\
        E&0
    \end{pmatrix}.
\]
$S_{T_{192}}$ also admits a non-symplectic automorphism
\[
    \begin{pmatrix}
        \frac{-1+\sqrt{-1}}{2}
        \begin{pmatrix}
            \omega&\omega \\
            -\zeta_{12}&\zeta_{12}
        \end{pmatrix}
        &0\\
        0&I
    \end{pmatrix}
\]
of order $6$, which generates a group of order $1152$ together with the above $T_{192}$.

The Schur's quartic is a quartic surface defined by $x^4-xy^3=z^4-zw^3$, named after F. Schur, who proved this surface contains $64$ lines. This surface is known to be the unique quartic to have $64$ lines, which is the maximum number of lines on a quartic surface \cite{DegItenSer}. $16$ lines out of $64$ are in form of $(f=g=0)$ where $f$ and $g$ are the linear factors of each side of the defining equation. The other $48$ lines are in the orbit of the line $(x-z=y-w=0)$ under the action of projective automorphisms.

\section{Main theorem and proof}

Let $X$ denote the direct product of $4$ projective lines $(\projsp^1)^4$ and let $H$ denote the line bundle $\mathscr{O}_{X}(1,1,1,1)$. Let $\tilde{G}\defeq BO\times \mathfrak{S}_4$ and $G\defeq O\times\mathfrak{S}_4$.

$\GL_2(k)\times \mathfrak{S}_4$ acts on $H^0(X,H)$ as follows: the first factor $\GL_2(k)$ acts diagonally and the second factor $\mathfrak{S}_4$ acts as permutation of coordinates. The maps
\begin{align*}
    BO\ni \varphi\quad &\longmapsto \quad(\varphi,\id)\in\GL_2(k)\times \mathfrak{S}_4, \\
    \mathfrak{S}_4\ni\sigma\quad&\longmapsto \quad\left(
    \begin{pmatrix}
        1&0\\0&1
    \end{pmatrix},\sigma\right)\in\GL_2(k)\times \mathfrak{S}_4
\end{align*}
are embeddings and their images commute with each other. Hence we have embedding $\tilde{G}\hookrightarrow\GL_2(k)\times \mathfrak{S}_4$ and action of $\tilde{G}$ on $H^0(X,H)$ by restriction. We define the action of $G$ on $X$ as the induced action by it.

\begin{lemma}\label{lemma24}
    An intersection of two hypersurface of type $(1,1,1,1)$ in $X$ is a $K3$ surfaces with polarization degree $2d=24$ if it is non-singular.
\end{lemma}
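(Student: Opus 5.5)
Write $S=D_1\cap D_2$ with $D_1,D_2\in|H|$, $H=\mathscr{O}_X(1,1,1,1)$, and assume $S$ is non-singular of dimension $2$. The plan is to check the two defining conditions of a $K3$ surface directly: triviality of $\omega_S$ via adjunction, and vanishing of $H^1(S,\mathcal{O}_S)$ via the Koszul resolution and the K\"unneth formula. After that we compute $H|_S^2$ by intersection theory on $X$.

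For the canonical class, $\omega_X=\mathscr{O}_X(-2,-2,-2,-2)=H^{-2}$. Since $S$ is a smooth complete intersection of two divisors in $|H|$, its normal bundle is $H|_S\oplus H|_S$. Adjunction then gives
\[
\omega_S\cong(\omega_X\otimes H^{2})|_S\cong\mathcal{O}_S .
\]

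For the irregularity, we use the Koszul resolution
\[
0\to H^{-2}\to (H^{-1})^{\oplus 2}\to \mathcal{O}_X\to\mathcal{O}_S\to 0 .
\]
By K\"unneth, $H^i(X,\mathscr{O}(a_1,\dots,a_4))=\bigoplus_{i_1+\dots+i_4=i}\bigotimes_j H^{i_j}(\projsp^1,\mathscr{O}(a_j))$.
\begin{itemize}
\item $H^{-1}=\mathscr{O}(-1,-1,-1,-1)$ has all cohomology zero, since $H^*(\projsp^1,\mathscr{O}(-1))=0$.
\item $H^{-2}=\mathscr{O}(-2,\dots,-2)$ has cohomology only in degree $4$, one-dimensional.
\item $\mathcal{O}_X$ has cohomology only in degree $0$, one-dimensional.
\end{itemize}
Split the resolution into two short exact sequences, $0\to H^{-2}\to (H^{-1})^{2}\to\mathcal{I}_S\to0$ and $0\to\mathcal{I}_S\to\mathcal{O}_X\to\mathcal{O}_S\to0$. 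The first gives $H^i(\mathcal{I}_S)\cong H^{i+1}(H^{-2})$, which is nonzero only for $i=3$. The second then gives $H^1(\mathcal{O}_S)=0$, as well as $H^0(\mathcal{O}_S)=k$, so $S$ is connected, and $H^2(\mathcal{O}_S)\cong H^3(\mathcal{I}_S)=k$, consistent with $\omega_S$ trivial. Hence $S$ is a $K3$ surface.

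For the degree, take $h=H|_S$, so $h^2=H^4\cdot[X]=H^4$ on $X$. Write $H=h_1+h_2+h_3+h_4$ with $h_i$ the pullbacks of points from the factors, so that $h_i^2=0$ and $h_1h_2h_3h_4=1$. Then
\[
H^4=4!\,h_1h_2h_3h_4=24 .
\]
Finally, $H$ is ample on $X$, so $h$ is ample on $S$ and gives a polarization of degree $24$.

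The main obstacle is the bookkeeping in the Koszul/K\"unneth step. The point to check carefully is that $\mathscr{O}(-1,\dots,-1)$ is acyclic, which kills any contribution to $H^1(\mathcal{O}_S)$. The smoothness hypothesis is used only to make adjunction and the $K3$ definition applicable. It also ensures the intersection is two-dimensional: otherwise $S$ would be singular or not a complete intersection.
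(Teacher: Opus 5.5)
Your proof is correct and follows essentially the same route as the paper. Both arguments use adjunction for $\omega_S$, then get $H^1(S,\mathcal{O}_S)=0$ from short exact sequences, the acyclicity of $\mathscr{O}(-1,-1,-1,-1)$, and the fact that $\mathscr{O}(-2,-2,-2,-2)$ has cohomology only in degree $4$ on $(\projsp^1)^4$. The differences are minor: you split the Koszul resolution at $\mathcal{I}_S$ where the paper passes through the intermediate hypersurface $Y$, and you also verify $H^0(S,\mathcal{O}_S)=k$ and $h^2=H^4=24$, which the paper's proof leaves unstated.
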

\begin{proof}
    Suppose that $S$ is a complete intersection of two hypersurface $Y$, $Z$ of type $(1,1,1,1)$.
    
    By adjunction formula, the canonical sheaf of $S$ is trivial.

    By short exact sequences
    \[
    \begin{tikzcd}
        0\ar[r]&\mathscr{O}_X(-Y)\ar[r]&\mathscr{O}_X\ar[r]&\mathscr{O}_Y\ar[r]&0, \\
        0\ar[r]&\mathscr{O}_X(-2Y)\ar[r]&\mathscr{O}_X(-Y)\ar[r]&\mathscr{O}_Y(-Y)\ar[r]&0,
    \end{tikzcd}
    \]
    we have $H^1(Y,\mathscr{O}_Y)=0$ and $H^2(Y,\mathscr{O}_X(-Y)|_Y)=0$. By these and the short exact sequence
    \[
    \begin{tikzcd}
        0\ar[r]&\mathscr{O}_X(-Y)|_Y\ar[r]&\mathscr{O}_Y\ar[r]&\mathscr{O}_S\ar[r]&0,
    \end{tikzcd}
    \]
    we have $H^1(S,\mathscr{O}_S)=0$.
\end{proof}

Therefore, to find $G$-stable $K3$ surfaces in $X$, one must determine $2$-dimensional subrepresentation of the action of $\tilde{G}$ on $H^0(X,H)$.

Let $\sigma_i$ denote the elementary symmetric polynomial of degree $i$ in variable $s$, $t$, $u$ and $v$ where $s$, $t$, $u$ and $v$ are the inhomogeneous coordinates of each factor of $X$.
\begin{proposition}\label{stables}
    $2$-dimensional subrepresentation of $H^0(X,H)$ is either of the two below:
    \begin{enumerate}
        \item $\langle st+uv+\omega(su+tv)+\omega^2(sv+tu),st+uv+\omega^2(su+tv)+\omega(sv+tu)\rangle$\;\text{and} 
        \label{enuone} \\
        \item $\langle \sigma_4+1, \sigma_2\rangle$,
    \end{enumerate}
    where $\omega$ is a primitive cube root of unity.
\end{proposition}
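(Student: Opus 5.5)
The plan is to decompose $H^0(X,H)$ completely as a representation of $\tilde G=BO\times\mathfrak{S}_4$ and check that it has exactly two $2$-dimensional subrepresentations. Because $\operatorname{ch}k\neq 2,3$ and $|\tilde G|=48\cdot 24$ is divisible only by $2$ and $3$, Maschke's theorem makes the representation completely reducible. Write $V=k^2$ for the standard representation of $BO\subset\GL_2(k)$. Then $H^0(X,H)\cong V^{\otimes 4}$, with $BO$ acting diagonally and $\mathfrak{S}_4$ permuting the tensor factors. This is exactly the setting of Schur--Weyl duality for $\GL_2\times\mathfrak{S}_4$. Only partitions of $4$ with at most two rows contribute, so
\[
V^{\otimes 4}\cong \operatorname{Sym}^4V\otimes \mathbf{1}\;\oplus\; (\operatorname{Sym}^2V\otimes\det)\otimes W_{(3,1)}\;\oplus\;\det{}^2\otimes W_{(2,2)}.
\]
Here $W_\lambda$ is the Specht module, of dimension $1,3,2$ respectively. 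As a check, the dimensions are $5+9+2=16$.

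Next I restrict the $\GL_2$-factors to $BO$ using Proposition~\ref{character}. Since $BO\subset\operatorname{SL}_2$, both $\det$ and $\det^2$ are trivial on $BO$. The representation $\operatorname{Sym}^2V$ is the adjoint representation; its character at $\alpha$ is $i+1+(-i)=1$, so it is irreducible and equal to $\rho_7$. For $\operatorname{Sym}^4V$, the eigenvalues of $\alpha$ are $1,-1,1,-1,1$, giving character value $1$ on class $4B$. Computing the other classes in the same way identifies $\operatorname{Sym}^4V\cong\rho_3\oplus\rho_7$. Hence, as a $\tilde G$-representation,
\[
H^0(X,H)\cong(\rho_3\boxtimes\mathbf 1)\oplus(\rho_7\boxtimes\mathbf 1)\oplus(\rho_7\boxtimes W_{(3,1)})\oplus(\mathbf 1\boxtimes W_{(2,2)}).
\]
These four summands are pairwise non-isomorphic irreducibles of dimensions $2,3,9,2$. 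In particular, no $1$-dimensional constituent occurs.

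Since every isotypic component has multiplicity one, the subrepresentations are exactly the direct sums of summands. The only ones of dimension $2$ are therefore $\rho_3\boxtimes\mathbf 1$ and $\mathbf 1\boxtimes W_{(2,2)}$, and it remains to write them explicitly.

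The $\mathfrak{S}_4$-invariant multilinear forms are spanned by $1,\sigma_1,\sigma_2,\sigma_3,\sigma_4$, which realises $\operatorname{Sym}^4V$ via binary quartics. Inside this space I will check directly that $\langle\sigma_4+1,\sigma_2\rangle$ is $BO$-stable:
\begin{itemize}
\item $\alpha$ acts on $s$ by $s\mapsto\zeta_8^2 s=\sqrt{-1}\,s$, and on a multilinear form of type $(1,1,1,1)$ it also multiplies by a common factor. This swaps $\sigma_4$ and $1$ up to sign, and fixes $\sigma_2$ up to the same scalar.
\item For $\beta$, it suffices to verify stability on the corresponding binary quartics. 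The span of $x^4+y^4$ and $x^2y^2$ is stable under the Hadamard-type matrix.
\end{itemize}
For the $(2,2)$ piece, the $\operatorname{SL}_2$-invariants in $V^{\otimes 4}$ are spanned by $st+uv$, $su+tv$ and $sv+tu$ modulo the totally symmetric combination $\sigma_2$. Equivalently, they are spanned by the products of $2\times 2$ determinants $(s-t)(u-v)$, $(s-u)(t-v)$, $(s-v)(t-u)$, which satisfy one linear relation. Diagonalising the action of a $3$-cycle in $\mathfrak{S}_4$ on this space gives the two $\omega$-eigenvectors listed in item~\ref{enuone}.

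I expect the main obstacle to be the bookkeeping in the explicit identification, namely checking $\beta$-stability of $\langle\sigma_4+1,\sigma_2\rangle$ and the correct projective scalars. The abstract decomposition itself is forced by the character table once the restriction of $\operatorname{Sym}^4V$ has been computed.
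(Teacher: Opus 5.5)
Your strategy is sound, and it organises the proof differently from the paper.

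\textbf{Comparison with the paper.} The paper first splits $H^0(X,H)$ under the second factor $\mathfrak{S}_4$ by degree, then argues case by case on how a $2$-dimensional $\tilde G$-subrepresentation restricts to $\mathfrak{S}_4$. If it restricts to the $2$-dimensional irreducible, it must be $\langle f_1,f_2\rangle$, which is then checked to be fixed by $\alpha$ and $\beta$. If it restricts to a sum of characters, it lies in $W=\langle\sigma_4,\sigma_3,\sigma_2,\sigma_1,1\rangle$, where the $5\times5$ matrices of $\alpha,\beta$ are written out and the character is read off. You instead get the whole $\tilde G$-decomposition at once from Schur--Weyl duality. Multiplicity-freeness then gives ``exactly two'' with no case analysis, and the $(3,1)$-isotypic part is accounted for explicitly. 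The $BO$-invariance of $f_1,f_2$ also becomes conceptual, since they span the $\operatorname{SL}_2$-invariants. Be precise there: since $(s-t)(u-v)=(su+tv)-(sv+tu)$ and similarly for the others, this space is $\{a(st+uv)+b(su+tv)+c(sv+tu) : a+b+c=0\}$. That is a complement of $\langle\sigma_2\rangle$, not a quotient by it, and it equals $\langle f_1,f_2\rangle$ because $1+\omega+\omega^2=0$. The paper's route is more elementary and explicit; yours is shorter and more structural.

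\textbf{Slips in the character computations.} Several computations in your sketch are wrong, though none affects the conclusion.
\begin{itemize}
\item The eigenvalues of $\alpha$ on $\operatorname{Sym}^4V$ are $\zeta_8^{4},\zeta_8^{2},1,\zeta_8^{-2},\zeta_8^{-4}$, i.e.\ $-1,\sqrt{-1},1,-\sqrt{-1},-1$. The trace is therefore $-1$, in agreement with the paper's $\chi(\alpha)=-1$. Your list $1,-1,1,-1,1$ belongs to an element with eigenvalues $\pm\sqrt{-1}$, such as $\beta$.
\item In fact $\alpha$ has order $8$ and $\beta^2=-1$, so the class labels in Proposition~\ref{character} are interchanged: $\alpha$ lies in an order-$8$ class and $\beta$ in $4B$.
\item Consequently the $3$-dimensional constituent of $\operatorname{Sym}^4V|_{BO}$ has trace $-1$ at $\alpha$, while $\operatorname{Sym}^2V$ has trace $+1$. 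They differ by the sign character $\rho_2$, so $\rho_7$ cannot appear in both of your summands.
\item With the correct labels, $\operatorname{Sym}^4V\cong\rho_3\oplus\rho_7$ and $\operatorname{Sym}^2V\cong\rho_6$. The paper's $W\cong\rho_3\oplus\rho_6$ carries the same label swap.
\end{itemize}

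\textbf{Other points to fix.} Irreducibility of $\operatorname{Sym}^2V$ does not follow from a single character value. Compute the full character (it has norm $1$), or use that $BO$ acts on it through the octahedral rotation group. Also, $\alpha$ does not swap $\sigma_4$ and $1$. It is diagonal, multiplies both by $-1$, and fixes $\sigma_2$; stability of $\langle\sigma_4+1,\sigma_2\rangle$ depends on those two scalars being equal.

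\textbf{Why the argument still works.} The proof only uses two facts, and both hold. First, the four $\tilde G$-summands are pairwise non-isomorphic, which is already clear from their $\mathfrak{S}_4$-factors and dimensions. Second, $\operatorname{Sym}^4V|_{BO}$ is a $2$-dimensional irreducible plus a $3$-dimensional irreducible, with no $1$-dimensional constituent.
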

\begin{proof}
    As the action by the second factor of $\tilde{G}$ on $H^0(X,H)$ does not change the degree of monomials, $H^0(X,H)$ decomposes into $4$ subrepresentations of $\mathfrak{S}_4$ as follows:
    \[
        H^0(X,H)=\langle s t u v\rangle\oplus\langle stu,stv,suv,tuv\rangle\oplus\langle st,su,sv,tu,tv,uv\rangle\oplus\langle s,t,u,v\rangle\oplus\langle 1\rangle.
    \]

    Obviously $\langle stuv\rangle$ and $\langle 1\rangle$ are irreducible, while $\langle stu,stv,suv,tuv\rangle$ and $\langle s,t,u,v\rangle$ are sum of the trivial representation and a $3$-dimensional irreducible representation, respectively. As for the last component $\langle st,su,sv,tu,tv,uv\rangle$ we have 
    \[
        \langle st,su,sv,tu,tv,uv\rangle 
        =\langle st+su+sv+tu+tv+uv\rangle 
        \oplus\langle f_1,f_2\rangle 
        \oplus\langle st-uv,su-tv,sv-tv\rangle.
    \]
    Here, 
    \begin{align*}
        f_1&=(st+uv)+\omega(su+tv)+\omega^2(sv+tu), \\
        f_2&=(st+uv)+\omega^2(su+tv)+\omega(sv+tu)
    \end{align*}
    and summands on the right hand side are irreducible representations.

    The action of second factor $\mathfrak{S}_4$ of $\tilde{G}$ on a $2$-dimensional subrepresentation of $\tilde{G}$ is either irreducible, namely same as $\langle f_1, f_2\rangle$, or sum of the trivial representations and the sign representations.

    Let $V$ be a $2$-dimensional subrepresentation of $\tilde{G}$. Then $V$ is, as a representation of the second factor $\mathfrak{S}_4$ of $\tilde{G}$, either irreducible (hence equal to $\langle f_1, f_2\rangle$) or is the sum of two $1$-dimensional representations.

    Let us consider the first case.
    
    Since actions by the generators on $V$ are 
    \begin{align*}
        \alpha f_1
        &=f_1, \\
        \alpha f_2
        &=f_2, \\
        \beta f_1
        &=f_1, \\
        \beta f_2
        &=f_2,
    \end{align*}
    we conclude that $V$ is a subrepresentation.

    In the latter case, we have to find $2$-dimensional subrepresentations of $W:=\langle\sigma_4,\sigma_3,\sigma_2,\sigma_1,1\rangle$ under the action of the first factor $BO$ of $\tilde{G}$. The generators act on $W$ as follows.
    \begin{alignat*}{2}
        \alpha\sigma_4 &=-\sigma_4,\qquad & \beta\sigma_4&=\frac{1}{4}(\sigma_4+\sigma_3+\sigma_2+\sigma_1+1) \\
        \alpha\sigma_3 &=i\sigma_3,\qquad & \beta\sigma_3&=\sigma_4+\frac{1}{2}\sigma_3-\frac{1}{2}\sigma_1-1 \\
        \alpha\sigma_2 &=\sigma_2,\qquad & \beta\sigma_2&=\frac{3}{2}\sigma_4-\frac{1}{2}\sigma_2+\frac{3}{2}\sigma_1 \\
        \alpha\sigma_1 &=-i\sigma_1,\qquad & \beta\sigma_1&=\sigma_4-\frac{1}{2}\sigma_3+\frac{1}{2}\sigma_1-1 \\
        \alpha 1 &=-1,\qquad & \beta 1&=\frac{1}{4}(\sigma_4-\sigma_3+\sigma_2-\sigma_1+1).
    \end{alignat*}
    Let $\chi$ be the character of this representation $W$. From the calculation above, it follows that $\chi(\alpha)=-1$ and $\chi(\beta)=1$.
    
    By character theory, we deduce from Proposition~\ref{character} that the representation of $BO$ on $W$ is sum of two irreducible representations $\rho_3$ and $\rho_6$, and we have decomposition
    \[
        W\cong\langle\sigma_4+1,\sigma_2\rangle\oplus\langle\sigma_4-1,\sigma_3,\sigma_1\rangle.
    \]
    Therefore $\langle\sigma_4+1,\sigma_2\rangle$ is a representation of $\tilde{G}$ and there is no other $2$-dimensional subrepresentation in $H^0(X,H)$.
\end{proof}

The surface defined by first subrepresentation in Proposition~\ref{stables} is singular.

\begin{proposition}
    $T\defeq((st+uv)+\omega(su+tv)+\omega^2(sv+tu)=(st+uv)+\omega^2(su+tv)+\omega(sv+tu)=0)$ is singular at $(0,0,0,0)$.
\end{proposition}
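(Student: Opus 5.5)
We check singularity directly with the Jacobian criterion in the affine chart $\mathbf{A}^4\subset(\projsp^1)^4$ with coordinates $s,t,u,v$, which contains $(0,0,0,0)$. Write
\[
    f_1=(st+uv)+\omega(su+tv)+\omega^2(sv+tu),\qquad f_2=(st+uv)+\omega^2(su+tv)+\omega(sv+tu).
\]
Both are homogeneous quadratic polynomials in $s,t,u,v$ with no constant term, so $f_1(0,0,0,0)=f_2(0,0,0,0)=0$, and the origin lies on $T$.

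Next we compute the partial derivatives. Every monomial of $f_1$ and $f_2$ has degree exactly $2$, so every first partial derivative is a linear form in $s,t,u,v$. For instance,
\[
    \partial f_1/\partial s=t+\omega u+\omega^2 v,
\]
and the other derivatives have the same shape. All of them vanish at the origin, so the Jacobian matrix of $(f_1,f_2)$ at $(0,0,0,0)$ is the zero $2\times 4$ matrix. Its rank is $0<2=\operatorname{codim}T$.

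To conclude, we use that near the origin $T$ is the complete intersection $f_1=f_2=0$ and is $2$-dimensional there. The point is that $f_1$ and $f_2$ are linearly independent quadrics, so they share no common factor. Hence the Jacobian criterion applies, and it shows that $T$ is not smooth at $(0,0,0,0)$. Since the Jacobian vanishes identically at the origin, even a dimension-free tangent-space argument works: the Zariski tangent space there is all of $k^4$, which is larger than $2$.

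The only point needing care is this dimension claim, namely that $T$ has dimension exactly $2$ at the origin, so that singularity means what we want. I would settle it by observing that $f_1=f_2=0$ defines an affine cone over a curve in $\projsp^3$. That curve is the intersection of two distinct quadrics with no common component, so it is $1$-dimensional. The cone through the origin is therefore $2$-dimensional, and its vertex is a singular point because the tangent cone there is the cone itself rather than a plane.
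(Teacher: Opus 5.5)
Your main line is the paper's own argument. Both equations are homogeneous quadrics, so every first partial is a linear form vanishing at the origin and the Jacobian there is zero; the paper's proof consists of just this. What you add is the local-dimension check, and the one step you single out as needing care rests on a non sequitur. Linearly independent (or distinct) quadrics can share a factor, e.g.\ $xy$ and $xz$. Your later ``two distinct quadrics with no common component'' repeats the same unproved assertion, and ``rather than a plane'' is likewise asserted, not shown. This is a gap in the justification, not in the result.

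It is easily closed here. With $L_1=s+t-u-v$, $L_2=s-t+u-v$, $L_3=s-t-u+v$ one checks
\[
4f_1=L_1^2+\omega L_2^2+\omega^2L_3^2,\qquad 4f_2=L_1^2+\omega^2L_2^2+\omega L_3^2 .
\]
So $f_1$ is a quadric of rank $3$, hence irreducible. Moreover, since $\omega\neq\omega^2$, the system $f_1=f_2=0$ is equivalent to $L_1^2=L_2^2=L_3^2$. In the affine chart $T$ is therefore the union of the four planes $\{t=u=v\}$, $\{s=u=v\}$, $\{s=t=v\}$, $\{s=t=u\}$, all containing the line $s=t=u=v$; this matches the paper's remark on the singular locus. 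This gives $\dim_0T=2$ at once. It also gives singularity directly, since the origin lies on four irreducible components. Because these planes span $k^4$, no function vanishing on $T$ has a nonzero linear term at the origin, so your tangent-space computation is valid for the reduced $T$ as well.

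Alternatively, you can bypass the dimension claim altogether. From $T\subseteq V(f_1)\subsetneq\mathbf{A}^4$ you get $\dim_0T\le 3<4$. Comparing the $4$-dimensional tangent space with $3$ rather than $2$ is what makes your tangent-space remark genuinely dimension-free.
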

\begin{proof}
    Since both of defining equations are homogeneous polynomial of degree $2$, $T$ is singular at $(0,0,0,0)$ by Jacobian criterion.
\end{proof}

\begin{remark}
    The Singular locus of $T$ is the curve $(s=t=u=v)$.
\end{remark}

The other subrepresentation does yield a non-singular surface.
\begin{theorem}[main theorem]\label{mainthm}
    Let $S$ be the surface $(\sigma_4+1=\sigma_2=0)$.
    \begin{enumerate}
        \item In characteristic $p\neq 2,3$, $S$ is non-singular and is a $K3$ surface.  \label{main1}
        \item In characteristic $p\neq 2,3$, $\mathfrak{S}_4\times\mathfrak{S}_4$ acts faithfully on $S$ with invariant polarization degree $24$ and the symplectic subgroup is isomorphic to $\mathfrak{A}_{4,4}$. \label{main2}
        \item For a $K3$ surface $T$ over the complex field, following conditions are equivalent to each other:
            \begin{enumerate}
                \item $T$ is isomorphic to $S$.\label{main31}
                \item $\mathfrak{S}_4\times\mathfrak{S}_4$ acts faithfully on $T$ with invariant polarization degree $24$ and the symplectic subgroup is isomorphic to $\mathfrak{A}_{4,4}$.\label{main32}
                \item The transcendental lattice of $T$ is
                \[\begin{pmatrix}
                    8&4\\4&8
                \end{pmatrix}.\]\label{main33}
            \end{enumerate}
        \label{main3}
    \end{enumerate}
\end{theorem}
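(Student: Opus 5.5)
For part (1) I would apply the Jacobian criterion to $S=(\sigma_4+1=\sigma_2=0)$ on every chart of $X=(\projsp^1)^4$, and then use Lemma~\ref{lemma24}. The group $G$ acts transitively enough on the charts to cut down the work. The element $\beta$ maps $0$ to $1$, and the $O$-orbit of $\infty$ contains $0,\pm1,\pm i$. So it suffices to check points where all coordinates are finite, together with a few boundary strata.

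On the affine chart the singular points are those where $\sigma_4=-1$, $\sigma_2=0$, and the gradients $(\prod_{j\ne k}x_j)_k$ and $(\sum_{j\neq k}x_j)_k$ are proportional. Since $\sigma_4=-1$, every coordinate is nonzero. The first gradient is then $(-1/x_k)_k$, so proportionality reads $x_k(\sigma_1-x_k)=c$ for all $k$. Hence each $x_k$ is a root of a single quadratic, so the coordinates take at most two values $a,b$.

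This leaves a small case analysis: the multiplicities $(4,0)$, $(3,1)$, $(2,2)$, combined with $\sigma_2=0$ and $\sigma_4=-1$. For example, $(4,0)$ gives $6a^2=0$, which is impossible. The other cases reduce to integer identities whose only solutions force $p\in\{2,3\}$. I would track exactly which primes divide the resultants, so as to confirm that only $2$ and $3$ occur.

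Points with some coordinate equal to $\infty$ are handled by the symmetry above, or by a direct chart computation. Once $S$ is smooth, Lemma~\ref{lemma24} gives that $S$ is a $K3$ surface with $H^4=24$.

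For part (2), invariance of $S$ follows from Proposition~\ref{stables}, since $\langle\sigma_4+1,\sigma_2\rangle$ is $\tilde G$-stable. The polarization $H|_S$ is $G$-invariant of degree $24$.

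For faithfulness I would show that the only elements of $G$ acting trivially on $S$ are trivial. $S$ is not contained in any diagonal $x_i=x_j$ or in any fixed locus of an element of $O$. Since the normal subgroups of $\mathfrak S_4\times\mathfrak S_4$ are products and diagonal types, it is enough to exhibit points moved by a few representative elements.

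To identify the symplectic part, I would compute the action on the $2$-form given by the residue $\Omega=\operatorname{Res}\,\frac{ds\,dt\,du\,dv}{(\sigma_4+1)\sigma_2}$. A transposition of coordinates negates $ds\,dt\,du\,dv$ and fixes both equations, so it is non-symplectic.

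For $\alpha$: each $dx$ is scaled by $i$, so the top form is fixed. We have $\alpha(\sigma_4+1)=-(\sigma_4+1)$ and $\alpha\sigma_2=\sigma_2$ (up to the automorphy factor, computed honestly in homogeneous coordinates). So $\alpha$ acts by $-1$. For $\beta$ I would do a similar computation, using that $\beta$ acts on the span via the $\rho_3$ representation. In each factor the character is then the sign character, so the symplectic subgroup is the index-two subgroup $\mathfrak A_{4,4}$.

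For part (3), (a)$\Rightarrow$(b) is part (2). For (b)$\Rightarrow$(c) I would quote the uniqueness result of Brandhorst--Hashimoto (case 78c), which identifies the transcendental lattice as $\begin{pmatrix}8&4\\4&8\end{pmatrix}$. For (c)$\Rightarrow$(a), $S$ satisfies (b), hence (c), by the previous step. Theorem~\ref{shiodainose} then gives $T\cong S$; orientation is harmless here because this lattice admits an orientation-reversing automorphism.

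I expect the main obstacle to be the smoothness check, in particular the careful prime bookkeeping in the case analysis, and correctly computing the characters of $\alpha$ and $\beta$ on $H^0(\omega_S)$.
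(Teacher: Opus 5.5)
Your proposal is correct. Parts (1) and (3) follow the paper's outline: the Jacobian criterion plus Lemma~\ref{lemma24}, then $(a)\Rightarrow(b)\Rightarrow(c)$ via Brandhorst--Hashimoto and Theorem~\ref{shiodainose}. Your orientation remark in (3) is a small extra care the paper omits. What differs is your execution of (1) and your argument for (2).

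In (1), the paper works with three $2\times 2$ minors on the strata with zero, one or two coordinates at $\infty$. Your observation that $\partial\sigma_4/\partial x_k=-1/x_k$ on $S$ is cleaner. When two distinct values $a,b$ occur, the case analysis must use the Vieta relation $a+b=\sigma_1$: without it, points such as $(1,1,1,-1)\in S$ would survive $\sigma_2=0$, $\sigma_4=-1$. With it, the three cases collapse to $6a^2=0$, $2a=0$, and $a+b=0$ with $-2a^2=0$. So only $2$ and $3$ appear, and no resultant bookkeeping is needed.

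Your symmetry reduction is also stronger than you state. The $O$-orbit of $\infty$ has six points and a point of $X$ has only four coordinates, so some $g\in O$ makes all coordinates finite. Hence the affine chart alone suffices. This also covers points such as $(0,\infty,\infty,\infty)\in S$, which the paper's list of cases does not include explicitly.

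In (2), the paper asserts faithfulness from Proposition~\ref{stables} and identifies $G_s$ group-theoretically. Since $G/G_s$ is cyclic, four candidate normal subgroups remain, and Mukai's list (Theorem~\ref{Mukai}) excludes all but $\mathfrak{A}_{4,4}$. You instead compute the character of $G$ on $H^0(S,\omega_S)$ via the residue of $ds\,dt\,du\,dv/((\sigma_4+1)\sigma_2)$, and this works:
\begin{itemize}
\item Transpositions act by $-1$ through the $4$-form.
\item $\alpha,\beta\in\mathrm{SL}_2$ preserve the invariant $4$-form, so they act by $\det(g|_{\langle\sigma_4+1,\sigma_2\rangle})^{-1}$.
\item This is $-1$ for both. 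For $\beta$ the matrix in the basis $(\sigma_4+1,\sigma_2)$ is $\frac12\begin{pmatrix}1&3\\1&-1\end{pmatrix}$, with determinant $-1=\det\rho_3(\beta)$.
\end{itemize}
Hence the character is $\mathrm{sgn}\times\mathrm{sgn}$ and its kernel is $\mathfrak{A}_{4,4}$.

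Your route is elementary, produces the character explicitly, and avoids the classification of finite symplectic groups (Mukai, Dolgachev--Keum) entirely. The paper's route is shorter, but it rests on that classification and on faithfulness, which it does not actually prove. For faithfulness, the cleanest form of your sketch is this. A nontrivial normal subgroup of $\mathfrak{S}_4\times\mathfrak{S}_4$ meeting both factors trivially would centralize both and hence be trivial. So any nontrivial kernel contains $V_4\times 1$ or $1\times V_4$. It therefore suffices that $(12)(34)$ and $\alpha^2$ both move the point $(1,1,1,-1)\in S$, which they do.
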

\begin{proof}
    (\ref{main1})
    Again by symmetry of the defining equations, we only have to consider the case where the point is of the form $([a:1],[b:1],[c:1],[d:1])$, $([a:1],[b:1],[c:1],[1:0])$, or $([a:1],[b:1],[1:0],[1:0])$ to determine if a point is a singular point.

    If $([a:1],[b:1],[c:1],[d:1])\in (\projsp^1)^4$ is contained in $S$, $a, b, c, d\neq 0$ holds since $abcd+1=0$. 
    
    Let $f=\sigma_4+1$ and $g=\sigma_2$. By Jacobian criterion, singular points $([a:1],[b:1],[c:1],[d:1])\in S$ satisfies
    \begin{align}
        \pdv{f}{s}\pdv{g}{t}-\pdv{f}{t}\pdv{g}{s}=(b-a)(c+d)cd&=0, \label{mainone}\\
        \pdv{f}{t}\pdv{g}{u}-\pdv{f}{u}\pdv{g}{t}=(c-b)(a+d)ad&=0, \label{maintwo}\\
        \pdv{f}{u}\pdv{g}{v}-\pdv{f}{v}\pdv{g}{u}=(d-c)(a+b)ab&=0. \label{mainthree}
    \end{align}
    When $b=a$, we have $c=b$ or $a+d=0$ by (\ref{maintwo}). 
    
    It follows that $c=d$ by (\ref{mainthree}) and $a=\pm c$ by (\ref{maintwo}). Hence the equation
    \begin{align*}
        f(a,a,c,c)=a^4+1=0, \\
        g(a,a,c,c)=2a^2\pm 4a^2=0
    \end{align*}
    holds (the double sign corresponds to $a=\pm c$). This has no solutions.

    Singular points $([a:1],[b:1],[c:1],[1:0])\in S$ satisfies
    \[
        abc=a+b+c=0
    \]
    and
    \[
        \rank
        \begin{pmatrix}
            bc & ac & ab & 1 \\
            1 & 1 & 1 & ab+bc+ca \\
        \end{pmatrix}
        \leq 1.
    \]
    Thus $ab=bc=ca$ and $3a^2b^2=3b^2c^2=3c^2a^2=1$. But this contradicts $abc=0$. 

    Points of the form $([a:1],[b:1],[1:0],[1:0])$ are not on $S$.

    Therefore $S$ is non-singular unless characteristic $p\neq 2,3$. $S$ is a $K3$ surface by Lemma~\ref{lemma24}.

    (\ref{main2})
    $S$ admits a faithful action of $G=\mathfrak{S}_4\times O\cong\mathfrak{S}_4\times\mathfrak{S}_4$ with invariant polarization degree $24$ by Proposition~\ref{stables}. 

    Let $G_s$ be the symplectic subgroup of $G$. Since $G_s$ acts trivially on the $1$-dimensional vector space $H^0(S,\omega_S)$ by definition, $G/G_s$ is isomorphic to finite subgroup of $k^{\times}$. Therefore it is a cyclic group. Among normal subgroups of $\mathfrak{S}_4\times\mathfrak{S}_4$, four groups
    \[
        \mathfrak{S}_4\times\mathfrak{S}_4, \mathfrak{S}_4\times\mathfrak{A}_4, \mathfrak{A}_4\times\mathfrak{S}_4\;\text{and}\; 
        \mathfrak{A}_{4,4}
    \]
    satisfy the condition that the quotient is cyclic. Since the orders of these four groups are coprime to $p$ and the groups other than $\mathfrak{A}_{4,4}$ are not isomorphic to subgroups of the maximum groups in Mukai's list, we have $G_s\cong\mathfrak{A}_{4,4}$ by Theorem~\ref{Mukai}.

    (\ref{main3})
    By Theorem~\ref{shiodainose}, we only have to show the implications $(a)\Rightarrow(b)\Rightarrow(c)$ for (\ref{main3}). $(a)\Rightarrow(b)$ is due to (\ref{main2}).

    Suppose that $T$ satisfies $(b)$. Then by \cite[Proposition 5.10.]{Brandhorst2021ExtensionsOM}, $T$ is isomorphic to 78c of \cite{Brandhorst2021ExtensionsOM} and its transcendental lattice is
    \[
    \begin{pmatrix}
        8&4\\4&8
    \end{pmatrix}.
    \]
\end{proof}

\begin{proposition}\label{mainthm2} 
    In characteristics $p\neq 2,3$, $S$ is isomorphic to:
    \begin{itemize}
        \item 
            \[
                S_{T_{192}}= (x^4+y^4+z^4+w^4-2\sqrt{-3}(x^2y^2+z^2w^2)=0)\subset\projsp^3,
            \]
        \item the Schur's quartic
            \[
                S_{Sch}= (x^4-xy^3=z^4-zw^3)\subset\projsp^3,
            \]
        \item the Kummer surface associated with the direct product of two elliptic curves
            \[
                E_1=\left(y_1^2=x(x_1-1)\left(x_1-\zeta_6\right)\right)
            \]
            and
            \[
                E_2=\left(y_1^2=(x_1^2-1)\left(x_1^2-\zeta_6\right)\right).
            \]
            which are isomorphic to $\mathbf{C}/\mathbf{Z}\oplus\mathbf{Z}\omega$ and $\mathbf{C}/\mathbf{Z}\oplus\mathbf{Z}\sqrt{-3}$ over $\complex$, respectively.
    \end{itemize}
\end{proposition}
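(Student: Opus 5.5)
The claim is in arbitrary characteristic $p\neq 2,3$, so Theorem~\ref{shiodainose} cannot be invoked directly. The plan is to write down explicit birational maps, defined over $\mathbf{Z}[1/6,\zeta_{24}]$, from $S$ to each of the three models. Since all four surfaces are $K3$ surfaces, a birational map between any two of them extends to an isomorphism, because minimal models of surfaces of non-negative Kodaira dimension are unique. Over $\complex$, a consistency check is that each of the three models has transcendental lattice $\begin{pmatrix}8&4\\4&8\end{pmatrix}$; for the Kummer surface this follows from $T(\mathrm{Km}(A))=T(A)(2)$. The real content, however, is the explicit maps.

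\textbf{Step 1 (Kummer model).} I use the product structure of $(\projsp^1)^4$ and regroup the coordinates as $(s,t)$ and $(u,v)$. The equations of $S$ then read
\[
p\,q+1=0,\qquad p+q+m\,n=0,
\]
where $p=st$, $q=uv$, $m=s+t$, $n=u+v$. The map $(s,t)\mapsto(m,p)$ is the quotient of $\projsp^1\times\projsp^1$ by the swap, and likewise for $(u,v)$. Eliminating $q=-1/p$ gives $p^2-1+p\,m\,n=0$. Hence, over the $(m,n)$-plane, $S$ is a double cover of the quotient: the discriminants $m^2-4p$ and $n^2-4q$ must become squares on $S$. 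Concretely, I write $s,t$ as the roots of $x^2-mx+p$ and $u,v$ as the roots of $x^2-nx-1/p$. Projecting $S$ to the $(p,m)$-coordinates, or to the pair $(p,s)$, should exhibit $S$ as birational to a double cover of $\projsp^1\times\projsp^1$ branched along four plus four lines. Such a double cover is a Kummer surface $\mathrm{Km}(E_1\times E_2)$, with the $E_i$ the double covers of $\projsp^1$ branched at the respective four points. I expect the two quadruples of branch points to have $j$-invariants $0$ and $j(\sqrt{-3})=54000$, after a Möbius normalization that matches the stated Weierstrass forms with $\zeta_6$.

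The guiding structure is the subgroup of $G$ preserving the partition $\{\{s,t\},\{u,v\}\}$. Its symplectic involutions give the Kummer translations, and the 16 nodes of the quotient come from fixed points of the Kummer involution. I will find that involution first, probably $(s,t,u,v)\mapsto(t,s,v,u)$ composed with an element of $O$, and then read off the quotient.

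\textbf{Step 2 (quartic models).} For $S_{T_{192}}$ and $S_{Sch}$, I find a degree-4 polarization on $S$: a $G'$-invariant class $h_4$ with $h_4^2=4$, built from the $(1,0,0,0)$-type classes and rational curves on $S$. For example, $h_4=h_{24}/\!\ldots$ does not work, so instead I take sums such as $H_s+H_t-C$, with $C$ a suitable rational curve lying in a fiber. I then compute the four sections of $H^0(S,h_4)$ explicitly as rational functions in $s,t,u,v$ and verify the quartic relation. For $S_{T_{192}}$ I choose $h_4$ invariant under the $T_{192}$-subgroup; this is natural because $\mathfrak{A}_{4,4}$ and $T_{192}$ share a large subgroup of the Mathieu group. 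For Schur's quartic I exploit the 64 lines: I identify on $S$ a configuration of smooth rational curves of degree 1 with respect to a candidate $h_4$. Alternatively, and more efficiently, I use the known elliptic fibration on Schur's quartic cut out by planes through a line, $x-z=\lambda(y-w)$, and match it with a $G$-symmetric elliptic fibration on $S$, obtained via Theorem~\ref{ellfiber} from the projection $S\to\projsp^1\times\projsp^1$, $(s,t,u,v)\mapsto(s,t)$ composed with a suitable function. Both models are then identified as Weierstrass fibrations with the same singular fibres and sections, and isomorphism follows from an explicit coordinate change.

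\textbf{Main obstacle.} The hardest step is producing the explicit isomorphism to Schur's quartic. The degree-4 class is not $G$-invariant, so guessing it requires locating the right rational curves on $S$. My first attempt will be the curves obtained by setting one coordinate to a fixed point of an element of $O$ (e.g. $s=0$, $s=\infty$, $s=\zeta_8^k$); these are cut by $tuv$-relations into conics and lines. I will then search for four sections among the products of linear forms in these coordinates.
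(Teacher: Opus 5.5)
Your overall framework matches the paper's: explicit birational maps over $\mathbf{Z}[1/6,\zeta_{24}]$, extended to isomorphisms by uniqueness of minimal models, plus the transcendental-lattice check over $\complex$. The gap is that the proposal never produces any of the three maps, and each concrete route you sketch fails as stated.

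In Step 1, $S$ is a $(\mathbf{Z}/2)^2$-cover of its quotient by $\langle (s\,t),(u\,v)\rangle$, since both discriminants must be adjoined. So $S\dashrightarrow(p,m)$ has degree $4$, not $2$. The projection to $(p,s)$ does have degree $2$, but solving for $u,v$ shows it is branched along $(1-p^2)^2s^2+4p(s^2+p)^2=0$. This is a $(4,4)$-curve that is not a union of fibres, so no $4+4$ lines appear.

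The plan of finding a ``Kummer involution'' on $S$ and reading off $16$ nodes of the quotient cannot work either. A symplectic involution of a $K3$ surface has exactly $8$ fixed points, and a non-symplectic one has smooth quotient. $S$ itself has to be exhibited as the Kummer surface, for instance through a non-symplectic involution whose quotient is $\projsp^1\times\projsp^1$ branched along $4+4$ rulings, and you have not identified one.

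In Step 2, take your candidate $H_s+H_t-C$ with $C$ in a fibre of the $s$-projection. Since $H_s\cdot H_t=2$ on $S$,
\[
(H_s+H_t-C)^2=2H_s\cdot H_t-2-2H_t\cdot C=2-2H_t\cdot C\le 2,
\]
so this is never the quartic polarization.

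The missing idea, which the paper uses, is to factor Schur's equation into linear forms:
\[
x(x-y)(x-\omega y)(x-\omega^2y)=z(z-w)(z-\omega^2w)(z-\omega w).
\]
Take $s,t,u,v$ to be $-x/z$, $(x-y)/(z-w)$, $(x-\omega y)/(z-\omega^2w)$ and $(x-\omega^2y)/(z-\omega w)$. Then the quartic equation becomes exactly $stuv=-1$, and $\sigma_2=0$ becomes a polynomial identity. The inverse map comes from two linear relations among these forms, such as $(x-y)+\omega(x-\omega y)+\omega^2(x-\omega^2y)=0$.

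Geometrically, each coordinate projection of $S$ is the pencil of planes through one of the four disjoint lines $\ell_{nn}=(f_{0n}=f_{1n}=0)$. Hence $h_4=H_n+\ell_{nn}$, where $\ell_{nn}$ is a trisection; for $n=1$ it is the curve $\{(-\lambda^{-3},\lambda,\omega^2\lambda,\omega\lambda)\}$ on $S$. The twelve curves $C_{ij}$ go to the lines $\ell_{ij}$.

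Once $S\cong S_{Sch}$ is established, no polarization search is needed for $S_{T_{192}}$. Both quartics have the form $F(x,y)=G(z,w)$ with equianharmonic binary quartics, so a block-diagonal projective transformation (the paper's $\operatorname{diag}(M,\zeta_8M)$) identifies them.

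For the Kummer model, the paper does not look for a double-cover structure on $S$ at all. It uses the fixed-point-free involution $\tau'$ of $E_2$ and the isogeny $(x,y)\mapsto(x^2,xy)$ to show $(E_2\times E_2)/\langle\tau'\times\tau'\rangle\cong E_1\times E_2$. It then invokes Mizukami's explicit isomorphism between $S_{T_{192}}$ and the Kummer surface of that quotient.
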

\begin{proof}
    In the case $k=\complex$, we utilize Theorem~\ref{shiodainose}. The transcendental lattice of $S$ and $S_{T_{192}}$ are isomorphic, being
    \[\begin{pmatrix}
        8&4\\4&8
    \end{pmatrix}\]
    by \cite{Brandhorst2021ExtensionsOM}. The transcendental lattice of the product of $E_\omega\defeq\mathbf{C}/\mathbf{Z}\oplus\mathbf{Z}\omega$ and $E_{\sqrt{-3}}\defeq\mathbf{C}/\mathbf{Z}\oplus\mathbf{Z}\sqrt{-3}$ can be calculated after \cite[Section 3]{10.1007/BFb0066163}, and is
    \[\begin{pmatrix}
        4&2\\2&4
    \end{pmatrix}.\]
    Thus, the transcendental lattice of the Kummer surface $\operatorname{Km}(E_\omega\times E_{\sqrt{-3}})$ is isomorphic to those of $S$ and $S_{T_{192}}$.

    $j$-invariants of the elliptic curves $E_{\omega}$ and $E_{\sqrt{-3}}$ are $0$ and $2\cdot 30^3$ \cite[Chapter 3, Section 12]{Coxprime}, respectively. Therefore the elliptic curves are isomorphic to $E\defeq(y^2=4x^3-1)$ and $E'\defeq(y^2=4x^3-15x-11)$. $E$ is isomorphic to double covering of $\projsp^1$ ramified over the four points $1$, $\omega$, $\omega^2$ and $\infty$ and $E'$ is isomorphic to double covering of $\projsp^1$ ramified over the four points $-1$, $\frac{1+2\sqrt{3}}{2}$, $\frac{1-2\sqrt{3}}{2}$ and $\infty$. Since their cross ratios coincide with those of $(0,1,\zeta_6, \infty)$ and $(\pm 1,\pm\zeta_{12})$, it follows that $E\cong E_1\defeq (y^2=x(x-1)(x-\zeta_6))$ and $E'\cong E_2\defeq (y^2=(x^2-1)(x^2-\zeta_6))$.

    Let $\tau'\in \Aut(E_2)$ denote the involution $(x,y)\mapsto (-x,-y)$. $\tau'$ has no fixed point. The quotient of $E_2$ by $\tau'$ is isomorphic to $E_1$ by the morphism $f:E_2/\langle\tau\rangle\to E_1, \overline{(x,y)} \mapsto (x^2,xy)$. 
    Let $g$ be the morphism
    \begin{align*}
        E_2\times E_2&\longrightarrow E_1\times E_2, \\
        (p,q)&\longmapsto (f(\overline{p}), p+q).
    \end{align*}
    Here, additive identity is arbitrary. As $g\circ(\tau'\times\tau')=g$, $g$ induces the isomorphism $(E_2\times E_2)/\langle\tau'\times\tau'\rangle\to E_1\times E_2$. Isomorphism between $S_{T_{192}}$ and the Kummer surface $\operatorname{Km}((E_2\times E_2)/\langle\tau'\times\tau'\rangle)$ is given in \cite{MizukamiKummer}.

    To give an isomorphism $S_{T_{192}}\simeq S_{Sch}$, define a matrix
    \[
        M\defeq
        \begin{pmatrix}
            \zeta_{12}-\zeta_6&-1 \\
            1-\sqrt{-1}&(1+\sqrt{-1})(\zeta_{12}-\zeta_6)
        \end{pmatrix}.
    \]
    $M$ maps the four roots of $x^4-2\sqrt{-3}x^2+1=0$ to the four roots of $x^4-x=0$ as a projective transformation. Therefore isomorphism $S_{T_{192}}\simeq S_{Sch}$ is given by
    \[
        \begin{pmatrix}
            M&0 \\ 0&\zeta_8M
        \end{pmatrix}\in\PGL_4(k).
    \]

    Next we construct an isomorphism between $S$ and $S_{Sch}$. Consider the rational map $\varphi:\projsp^3\dashrightarrow (\projsp^1)^4$ defined by
    \[
        [x:y:z:w]\;\longmapsto\;([x:-z],[x-y:z-w],[x-\omega y:z-\omega^2 w],[x-\omega^2y:z-\omega w]).
    \]

    The image of $\varphi$ is contained in the threefold $(\sigma_2=0)\subset (\projsp^1)^4$. Indeed, we have
    \begin{align*}
        &x(x-y)(z-\omega^2w)(z-\omega w)-z(z-w)(x-\omega^2y)(x-\omega y) \\
        &+x(x-\omega y)(z-w)(z-\omega w)-z(z-\omega^2 w)(x-y)(x-\omega^2 y) \\
        &+x(x-\omega^2 y)(z-w)(z-\omega^2 w)-z(z-\omega w)(x-y)(x-\omega y) \\
        &=(x^2z^2+x^2zw+x^2w^2-xyz^2-x yzw-xyw^2) \\
        &-(x^2z^2+xyz^2+y^2z^2-x^2zw-x yzw-y^2zw) \\
        &+(x^2z^2+\omega^2x^2zw+\omega x^2w^2-\omega xyz^2-x yzw-\omega^2xyw^2) \\
        &-(x^2z^2+\omega xyz^2+\omega^2y^2z^2-\omega^2x^2zw-x yzw-\omega y^2zw) \\
        &+(x^2z^2+\omega x^2zw+\omega^2x^2w^2-\omega^2xyz^2-x yzw-\omega xyw^2) \\
        &-(x^2z^2+\omega^2xyz^2+\omega y^2z^2-\omega x^2zw-x yzw-\omega^2y^2zw) \\
        &=0.
    \end{align*}
    Furthermore, the restriction of $\varphi$ on $S_{Sch}$ maps points on $S_{Sch}$ into $S$. We also denote the restriction by $\varphi$.
    
    If a point $P=([s_0:s_1],[t_0:t_1],[u_0:u_1],[v_0:v_1])\in S$ is in the image of $\varphi$, i.e. there is a point $[x:y:z:w]\in S_{Sch}$ such that $\varphi([x:y:z:w])=P$, they satisfy the equations
    \begin{align*}
        s_1x&=-s_0z, \\
        t_1(x-y)&=t_0(z-w), \\
        u_1(x-\omega y)&=u_0(z-\omega^2w), \\
        v_1(x-\omega^2 y)&=u_0(z-\omega w).
    \end{align*}
    and hence we have
    \begin{align*}
        0
        &=(x-y)+\omega(x-\omega y)+\omega^2(x-\omega^2y) \\
        &=\frac{t_0}{t_1}(z-w)+\omega\frac{u_0}{u_1}(z-\omega^2w)+\omega^2\frac{v_0}{v_1}(z-\omega w) \\
        &=\left(\frac{t_0}{t_1}+\omega\frac{u_0}{u_1}+\omega^2\frac{v_0}{v_1}\right)z
        -\left(\frac{t_0}{t_1}+\frac{u_0}{u_1}+\frac{v_0}{v_1}\right)w, \\
        0
        &=(z-w)+\omega^2(z-\omega^2w)+\omega(z-\omega w) \\
        &=\frac{t_1}{t_0}(x-y)+\omega^2\frac{u_1}{u_0}(x-\omega y)+\omega\frac{v_1}{v_0}(x-\omega^2y) \\
        &=\left(\frac{t_1}{t_0}+\omega^2\frac{u_1}{u_0}+\omega\frac{v_1}{v_0}\right)x
        -\left(\frac{t_1}{t_0}+\frac{u_1}{u_0}+\frac{v_1}{v_0}\right)y.
    \end{align*}
    This leads us to $[x:y:z:w]=[s_0AC:s_0BC:-s_1AC:-s_1AD]$ where
    \begin{align*}
        A&=\frac{t_1}{t_0}+\frac{u_1}{u_0}+\frac{v_1}{v_0}, \\
        B&=\frac{t_1}{t_0}+\omega^2\frac{u_1}{u_0}+\omega\frac{v_1}{v_0} \\
        C&=\frac{t_0}{t_1}+\frac{u_0}{u_1}+\frac{v_0}{v_1} \\
        D&=\frac{t_0}{t_1}+\omega\frac{u_0}{u_1}+\omega^2\frac{v_0}{v_1}.
    \end{align*}
    It follows that the rational map $\psi:S\dashrightarrow S_{Sch}$ defined by
    \[
        ([s_0:s_1],[t_0:t_1],[u_0:u_1],[v_0:v_1])\;\longmapsto\;[s_0AC:s_0BC:-s_1AC:-s_1AD]
    \]
    is inverse of $\varphi$, and therefore $\varphi$ and $\psi$ are birational. These birational maps extend to isomorphisms by minimality of a $K3$ surface.

\end{proof}

By the isomorphism $S_{T_{192}}\cong S_{Sch}$ constructed in the proof we deduce that $S_{Sch}$ also has a polarization-preserving automorphism group isomorphic to $T_{192}\rtimes C_6$ generated by
\begin{align*}
    \begin{pmatrix}
        \gamma&0\\
        0&E
    \end{pmatrix},
    \begin{pmatrix}
        \delta&0\\
        0&E
    \end{pmatrix},
    \begin{pmatrix}
        0&E\\
        E&0
    \end{pmatrix},
    \begin{pmatrix}
        \sqrt{-1}E&0\\
        0&E
    \end{pmatrix}\in\PGL_4(k),
\end{align*}
where
\begin{align*}
    \gamma =\begin{pmatrix}
        1&0 \\ 0&\omega
    \end{pmatrix},
    \;\delta=\sqrt{\frac{-1}{3}}
    \begin{pmatrix}
        1&-1\\-2&-1
    \end{pmatrix}.
\end{align*}

\section{Application of the isomorphism}

Let $h\in NS(X)$ be the class of a polarization of a $K3$ surface $X$. We denote by $\Aut(X,h)$ the subgroup of $\Aut(X)$ that preserves the class $h$ and by $\Aut_s(X,h)$ its symplectic subgroup.

Let $h_{24}$ be the class of the polarization of $S$ and let $h_4$ be the class of the polarization of $S_{Sch}$. In \cite{Brandhorst2021ExtensionsOM} the polarization-preserving automorphism groups are calculated, and we have 
\begin{align*}
    \Aut(S,h_{24})&\cong \mathfrak{S}_4\times\mathfrak{S}_4, \\
    \Aut_s(S,h_{24})&\cong \mathfrak{A}_{4,4}
\end{align*}
and
\begin{align*}
    \Aut(S,h_{4})&\cong T_{192}\rtimes C_6, \\
    \Aut_s(S,h_{4})&\cong T_{192}.
\end{align*}
over $\complex$.

The purpose of this section is to prove the following
\begin{theorem}\label{intersection}
    The intersection of $\Aut(S,h_{24})$ and $\Aut(S,h_4)$ in $\Aut(S)$ is isomorphic to $C_4\rtimes \mathfrak{S}_4$ and its symplectic subgroup is isomorphic to $C_2\times\mathfrak{S}_4$.
\end{theorem}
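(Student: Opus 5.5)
The plan is to compute the intersection directly inside $\PGL_4(k)$, transporting the $\mathfrak{S}_4\times\mathfrak{S}_4$ action to $S_{Sch}$ via the explicit birational maps $\varphi$ and $\psi$ of Proposition~\ref{mainthm2}.

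An element of $\Aut(S,h_{24})$ preserves $h_4$ exactly when its conjugate under $\varphi$ is induced by a linear map of $\projsp^3$ preserving $S_{Sch}$. Every element of $\Aut(S_{Sch},h_4)\cong T_{192}\rtimes C_6$ is such a linear map, and the list of generators $\gamma,\delta,\tau$ and the scalar matrix given at the end of Section 3 describes them all. So the intersection consists of the elements of $\mathfrak{S}_4\times O$ acting on $S$ whose transport to $S_{Sch}$ is linear.

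First I will look at how $\varphi$ is built. Its four coordinates are the ratios of $x,\,x-y,\,x-\omega y,\,x-\omega^2y$ against $-z,\,z-w,\,z-\omega^2w,\,z-\omega w$. These are the linear factors of $x^4-xy^3$ and of $z^4-zw^3$ respectively. A permutation of the four $\projsp^1$ factors is therefore realized linearly on $S_{Sch}$ exactly when it comes from a simultaneous projective automorphism of $\projsp^1_{[x:y]}$ and $\projsp^1_{[z:w]}$ permuting the factors compatibly. The symmetry group of $\{0,1,\omega,\omega^2\}$ is $\mathfrak{A}_4$, generated by $\gamma$ and $\delta$. Each of these acts on $[x:y]$ and, suitably twisted, on $[z:w]$, and so gives an element of $\mathfrak{S}_4$ (the permutation factor) times some element of $O$, since rescaling the ratios is a diagonal $\PGL_2$ action. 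Next I will check which permutations arise when the pairing $x$-factor $\leftrightarrow$ $z$-factor is combined with swaps: for instance a transposition composed with the diagonal action $s\mapsto -s$ or $s\mapsto 1/s$ lying in $O$. I will also check how the factor-swap $\tau\colon (x,y)\leftrightarrow(z,w)$ acts: it sends each ratio $r$ to $1/r$ up to the fixed sign, which is the element $\beta$-type $s\mapsto 1/s$ of $O$. Finally the scalar $\sqrt{-1}E\oplus E$ scales every ratio by $\sqrt{-1}$, and this lies in $O$ as $\alpha^2$.

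Collecting these, the intersection is generated by a group isomorphic to $\mathfrak{S}_4$, namely the permutations of the four factors realized by graph-type maps, together with the cyclic subgroup $C_4$ generated by $s\mapsto \sqrt{-1}s$ on all coordinates. This $C_4$ is normalized, with the inversion from $\tau$ acting by conjugation. That gives $C_4\rtimes\mathfrak{S}_4$ of order $96$.

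To show nothing else is in the intersection, I will use orders. The intersection embeds in both $\mathfrak{S}_4\times\mathfrak{S}_4$ (order $576$) and $T_{192}\rtimes C_6$ (order $1152$). I will check directly that an element of $O$ which is not among the found ones, for instance $\beta$ with trivial permutation, does not transform the linear forms defining $\varphi$ into linear forms, so its transport is not linear.

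For the symplectic part, I will intersect with $\mathfrak{A}_{4,4}$, equivalently determine the action on the 2-form. On $S_{Sch}$ a linear map $A$ acts on $\omega_S$ through $\det A$ divided by the scalar by which it multiplies the quartic. This computation is straightforward: $\sqrt{-1}E\oplus E$ has order-4 nonsymplectic character, and the resulting index-2 subgroup is $C_2\times\mathfrak{S}_4$.

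The main obstacle I expect is the exhaustiveness argument, that is, proving that no further element of $\mathfrak{S}_4\times O$ becomes linear. I would try first to argue via the $16$ lines $f=g=0$. Every element of $\Aut(S,h_4)$ permutes the $64$ lines, while elements of $\Aut(S,h_{24})$ send curves of $h_{24}$-degree $d$ to curves of the same degree. An element in both groups must therefore preserve the set of lines of a given $h_{24}$-degree. I will compute these degrees via $\varphi$: the $16$ special lines map to curves of $h_{24}$-degree smaller than the $48$ others. Such an element must then preserve the $16$ lines, whose stabilizer in $T_{192}\rtimes C_6$ is computable and should have order $96$.
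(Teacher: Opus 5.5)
Your lower bound is essentially sound. The step that carries the theorem, namely that nothing beyond your order-$96$ group lies in the intersection $K$, has a genuine gap, and the line argument you propose for it rests on a false claim. Here are the actual $h_{24}$-degrees. The twelve lines $\ell_{ij}$, $i\neq j$, are the fibre components $C_{ij}$ and have degree $2$. The four lines $\ell_{ii}$ have degree $6$: for instance $\ell_{11}=(x=z=0)$ becomes $\{(-1/t^3,t,\omega^2t,\omega t)\}$ on $S$, equivalently $\ell_{11}\cdot C=6$. Of the $48$ lines $\{(z,w)=A(x,y)\}$, twenty-four have degree $2$ (e.g.\ $x=z$, $y=w$ becomes $\{(-1,1,u,1/u)\}$) and twenty-four have degree $4$. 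So the $16$ lines are not singled out by degree. Moreover, every element of $\Aut(S,h_4)$ already permutes those $16$ lines, since the listed generators permute the linear factors of both sides. Hence their stabilizer in $T_{192}\rtimes C_6$ is the whole group of order $1152$, not a group of order $96$. The invariant you need is the set $B=\{\ell_{ii}\}$ of the four degree-$6$ lines. An element of $K$ must permute $B$, and conversely an element of $\Aut(S,h_4)$ preserving $B$ preserves $\sum_{i\neq j}\ell_{ij}$, hence $h_{24}$. That is exactly the paper's lemma, proved via $\ell_{11}\cdot g(C)\le 4<6=\ell_{11}\cdot C$. The stabilizer of $B$ is then the preimage, under the action $\Aut(S,h_4)\to\mathfrak{S}_8$ on the eight planes $f_{mn}=0$, of the stabilizer of the matching $\{1,5\},\{2,6\},\{3,7\},\{4,8\}$.

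Your order-counting fallback does not close the gap either. Transported correctly, the generators become:
$\tau\mapsto\bigl((s,t,u,v)\mapsto(1/s,1/t,1/v,1/u)\bigr)$;
$\operatorname{diag}(\gamma,\gamma^2)\mapsto$ the pure $3$-cycle $(s,t,u,v)\mapsto(s,u,v,t)$;
$\operatorname{diag}(\delta,\delta)\mapsto\bigl((s,t,u,v)\mapsto(-t,-s,-v,-u)\bigr)$;
$\sqrt{-1}E\oplus E\mapsto s\mapsto\sqrt{-1}s$ on all factors. This last is $\alpha^{\pm1}$ in $\PGL_2$, of order $4$, not $\alpha^2$.

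So your group is $H=\{(\sigma,g)\in\mathfrak{S}_4\times D_4:\operatorname{sgn}\sigma=\epsilon(g)\}$, where $D_4\subset O$ is the stabilizer of $\{0,\infty\}$ and $\ker\epsilon=\langle s\mapsto\sqrt{-1}s\rangle$. Its pure permutations form only $\mathfrak{A}_4$, and odd permutations occur only twisted by $s\mapsto 1/s$. Since $K\supseteq H$ surjects onto the permutation factor, $|K|=24\cdot|K\cap(1\times O)|$. The only subgroups of $O$ properly containing $\langle s\mapsto\sqrt{-1}s\rangle$ are $D_4$ and $O$, and both contain $s\mapsto1/s$. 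So the element you must exclude is $(1,\,s\mapsto1/s)$, equivalently the pure transposition $\pi=(3\,4)$ of factors. Excluding $(1,\beta)$ only shows $K\cap(1\times O)\neq O$, because $\beta(0)=-1$ gives $\beta\notin D_4$. It leaves $\mathfrak{S}_4\times D_4$, of order $192$ dividing $1152$, open. The transposition is easy to exclude. If it were linear, it would permute the $16$ lines with $\ell_{ij}\mapsto\ell_{\pi(i)\pi(j)}$ for $i\neq j$, hence fix $\ell_{11}$ by incidence. Yet it sends $\{(-1/t^3,t,\omega^2t,\omega t)\}$ to $\{(-1/t^3,t,\omega t,\omega^2t)\}$. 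Finally, $\sqrt{-1}E\oplus E$ has determinant $-1$ and fixes the quartic, so it acts on the $2$-form by $-1$, not by a character of order $4$. Your index-$2$ conclusion is right, but for that reason.
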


To begin with, we focus on a configuration of rational curves on $S$.

\begin{lemma}
    The projections $p_i:S\to\projsp^1$ to the $i$th factor are elliptic fibrations and they have exactly $6$ singular fibers of type \emph{IV} in Kodaira's notation.
\end{lemma}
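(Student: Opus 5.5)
The plan is to compute the fibers of $p_1$ directly, find enough type IV fibers, and then use the Euler characteristic to exclude any further singular fibers. Since the second factor $\mathfrak{S}_4$ of $G$ permutes the four factors of $X$ and preserves $S$, it suffices to treat $p_1$. In homogeneous coordinates $S$ is cut out by
\[
s_0t_0u_0v_0+s_1t_1u_1v_1=0,\qquad \sigma_2^{h}=0,
\]
where $\sigma_2^h$ is the multihomogenization of $\sigma_2$. For a fixed $s=[s_0:s_1]$, the fiber $p_1^{-1}(s)$ is therefore the intersection of two divisors of type $(1,1,1)$ in $(\projsp^1)^3$. By adjunction such an intersection has trivial canonical class, so a smooth member is a curve of genus $1$. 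Let $F=p_1^*\mathscr{O}(1)$, the restriction of $\mathscr{O}_X(1,0,0,0)$. Then $F^2=0$, $F$ is nef because it is base point free, and $F$ is primitive because the fibers are connected (a connected genus one curve is reduced and irreducible for general $s$). Theorem~\ref{ellfiber} then shows that $p_1$ is an elliptic fibration. One could also argue directly by checking that one explicit fiber is smooth.

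The main step is to exhibit the singular fibers. Take the fiber over $s=0$, that is $s_0=0$. The equations become $t_1u_1v_1=0$ together with the $s_1$-part of $\sigma_2^h$, namely $t_0u_0v_1+t_0u_1v_0+t_1u_0v_0=0$. Setting $t_1=0$ forces $u+v=0$, and the other two cases are symmetric. The fiber over $0$ is therefore the union of three smooth rational curves:
\[
\{t=\infty,\ u=-v\},\quad \{u=\infty,\ t=-v\},\quad \{v=\infty,\ t=-u\}.
\]
Any two of these curves meet only at the point $(\infty,\infty,\infty)$. Next I will check that their tangent directions at this point are pairwise distinct, which is a short local computation in the coordinates $1/t,1/u,1/v$. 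It then follows that the fiber is a configuration of three lines in a plane through one point, i.e. Kodaira type IV.

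The octahedral group $O$ acts on the first factor $\projsp^1$ through $\alpha$ and $\beta$, and $p_1$ is equivariant for this action. Hence every point of the $O$-orbit of $0$ also has a type IV fiber. This orbit is the set of six octahedron vertices $\{0,\infty,\pm1,\pm\sqrt{-1}\}$, which is immediate from the formulas for $\alpha$ and $\beta$.

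To see that there are no other singular fibers, I will use the Euler characteristic, treating characteristic $0$ first. A type IV fiber has Euler number $4$, so the six fibers contribute $6\cdot 4=24=e(S)$. Since every singular fiber has positive Euler number, all remaining fibers are smooth. In positive characteristic $p\neq2,3$, wild ramification could add a correction term. To rule this out I would either use the tameness of type IV fibers for $p\ne 3$ in Ogg's formula, or directly verify from the Jacobian rank condition that the only $s$ whose fibers are singular are the six vertices.

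I expect the transversality check at $(\infty,\infty,\infty)$ and the exclusion of further singular fibers in positive characteristic to be the only delicate points; the rest follows from the symmetry.
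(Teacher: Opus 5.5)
Your proposal is correct and follows the paper's proof. Both reduce to $p_1$ by the permutation symmetry and identify the fiber over $0$ as the three rational curves $C_{12}+C_{13}+C_{14}$ concurrent at $(0,\infty,\infty,\infty)$. Both then move this fiber to $\pm1,\pm\sqrt{-1},\infty$ by the octahedral action and finish with $6\cdot 4=24=e(S)$.

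The one real difference is primitivity. The paper gets it from the section $C_{34}$ (where $u=0$, $v=\infty$, $s=-t$), using $(p_1^*(0).C_{34})=1$. That is cleaner than your connectedness remark, since connected fibers alone need extra input, such as the absence of multiple fibers on a $K3$, to give primitivity in $NS(S)$. Your fallback of checking one explicit smooth fiber does suffice. Your tangent-direction and tameness remarks for $p\ge 5$ only make explicit what the paper leaves implicit.
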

\begin{proof}
    By symmetry, it is enough to show the statement for $p_1$. Set
    \[
        C_{12}=\{([0:1],[1:0],[a:b],[a:-b])\in S\mid [a:b]\in\projsp^1\}.
    \]
    Then $C_{12}$ is a smooth rational curve. Similarly, for $1\leq i,j\leq,4$ with $i\neq j$, let $C_{ij}$ denote the rational curve whose $i$th and $j$th coordinates are $0$ and $\infty$, respectively.

    $p_1$ has a singular fibers over $0$, namely $C_{12}+C_{13}+C_{14}$. This forms the configuration of type \emph{IV} since the three smooth rational curves meet at a single point $(0,\infty,\infty,\infty)$. The class of fiber of $p_1$ is primitive because $(p_1^*(0).C_{34})=1$. Hence $p_1$ is an elliptic fibration by Theorem~\ref{ellfiber}.

    By symmetry $p_1$ also has singular fibers of type \emph{IV} over $\pm 1$, $\pm\sqrt{-1}$ and $\infty$.
    Since a singular fiber of type \emph{IV} has Euler number $4$, the Euler number of these fibers adds up to $24$, which equals the Euler number of a $K3$ surface. Thus, there is no other singular fibers of $p_1$.
\end{proof}

The polarization class $h_{24}$ is represented by the divisor of the sum of fibers of each $p_i$, say $\sum_{i\neq j}C_{ij}$. 

Set $f_{mn}\in H^0(S_{Sch},\mathcal{O}(1))$ ($m=0,1$, $n=1,\ldots,4$) as follows:
\begin{align*}
    f_{01}&=x,\, f_{02}=x-y,\, f_{03}=x-\omega y,\, f_{04}=x-\omega^2y,\\
    f_{11}&=z,\, f_{12}=z-w,\, f_{13}=z-\omega^2w,\, f_{14}=z-\omega w.
\end{align*}
And let $\ell_{ij}\defeq(f_{0i}=f_{1j}=0)$ ($1\leq i,j\leq 4$). The dual graph of configuration of these lines is $K_{4,4}$ (cf. \cite{Inosekum}). The isomorphism $\psi:S\to S_{Sch}$ maps $C_{ij}$ to the lines $\ell_{ij}$. Let $C\defeq\sum_{i\neq j}\ell_{ij}$.

\begin{lemma}
    An automorphism $g\in\Aut(S,h_4)$ is contained in $\Aut(S,h_{24})$ if and only if it stabilizes the set $\{\ell_{ii}\mid 1\leq i\leq 4\}$. 
\end{lemma}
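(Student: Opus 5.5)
The plan is to translate the condition ``$g$ preserves $h_{24}$'' into a condition on the $16$ lines $\ell_{ij}$, which every element of $\Aut(S,h_4)$ permutes. Identify $S$ with $S_{Sch}$ via $\psi$, so $h_{24}=[C]$ with $C=\sum_{i\neq j}\ell_{ij}$.

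First I will show that every $g\in\Aut(S,h_4)$ permutes the $16$ lines $\ell_{ij}$. Since $g$ preserves $h_4$ and is induced by a projective transformation of $\projsp^3$, it maps lines to lines. Among the $64$ lines, the $16$ lines $\ell_{ij}$ should be distinguished by their configuration: each $\ell_{ij}$ lies in the planes $f_{0i}=0$ and $f_{1j}=0$, and each of these planes cuts $S_{Sch}$ in four concurrent lines. If that property does not single them out, I will instead check directly on the explicit generators $\gamma,\delta$, the swap, and $\sqrt{-1}E$ listed above. These matrices permute the linear forms $\{f_{0n}\}$ and $\{f_{1n}\}$ up to scalars, with the swap exchanging the two families. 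Hence $g$ permutes the $\ell_{ij}$ and preserves the $K_{4,4}$ structure, i.e.\ it acts through permutations of the index pairs $(i,j)$ compatible with rows and columns, possibly with a transpose.

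Next observe that $\sum_{i,j}\ell_{ij}$ is $g$-invariant, being the sum over all lines of the configuration. Indeed, it is $\sum_i (f_{0i}=0)|_S$, and each plane section is a hyperplane section, so this sum has class $4h_4$. Therefore
\[
g^*[C]=[C]\iff g^*\Bigl[\sum_i\ell_{ii}\Bigr]=\Bigl[\sum_i\ell_{ii}\Bigr].
\]
If $g$ stabilizes the set $\{\ell_{ii}\}$, the right-hand side clearly holds, which gives the ``if'' direction.

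For the ``only if'' direction, suppose $g$ fixes the class $D=\sum_i\ell_{ii}$. Then $g(D)=\sum_i g(\ell_{ii})$ is a sum of four of the lines $\ell_{ij}$ with the same class as $D$. I will compare intersection numbers using $\ell^2=-2$, with $\ell_{ij}\cdot\ell_{kl}=1$ if exactly one index agrees and $0$ if none agree. Then $D^2=-8$, and for any line $\ell_{kl}$ we have $D\cdot\ell_{kl}=-2$ if $k=l$, and $D\cdot\ell_{kl}=2$ if $k\neq l$ (it meets $\ell_{kk}$ and $\ell_{ll}$). Since $g$ preserves intersection numbers, $g(D)\cdot g(\ell_{ii})=D\cdot\ell_{ii}=-2$. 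But $g(D)=D$ as classes, so $D\cdot g(\ell_{ii})=-2$, which forces $g(\ell_{ii})$ to be a diagonal line. Hence $g$ stabilizes $\{\ell_{ii}\}$.

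The main obstacle is the first step: justifying that every element of $\Aut(S,h_4)$ permutes exactly these $16$ lines, and not merely the $64$. I would settle it by the generator check above, using that $\Aut(S,h_4)$ equals the group $T_{192}\rtimes C_6$ generated by the listed matrices.
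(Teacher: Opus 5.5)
Your proof is correct, and it takes a genuinely different route from the paper's. The paper works directly with $C=\sum_{i\neq j}\ell_{ij}$. Assuming $g$ permutes $A=\{\ell_{ij}\}$, it shows that if $g(C)$ contained a diagonal line $\ell_{ii}$ then $\ell_{ii}\cdot g(C)\le -2+6=4<6=\ell_{ii}\cdot C$. Hence $g(C)$ consists exactly of the off-diagonal lines. Its converse again uses that $g$ permutes $A$, in order to pass from $B$ to $A\setminus B$.

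You use an extra relation the paper never uses, namely $\sum_{i,j}\ell_{ij}=\sum_i (f_{0i}=0)|_S\sim 4h_4$. This lets you trade $h_{24}=[C]$ for $[D]=4h_4-h_{24}$ with $D=\sum_i\ell_{ii}$, and conclude from the negativity $D\cdot g(\ell_{ii})=-2$.

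Your route gives more than you use. The step you flag as the main obstacle, that every $g\in\Aut(S,h_4)$ permutes the sixteen lines, is unnecessary for this lemma (the paper simply asserts it). The curve $g(\ell_{ii})$ is irreducible and satisfies $D\cdot g(\ell_{ii})=-2<0$. An irreducible curve not contained in the support of the effective divisor $D$ meets $D$ non-negatively, so $g(\ell_{ii})$ must be one of the $\ell_{jj}$. The ``if'' direction likewise only needs $g(D)=D$ and $g^*h_4=h_4$.

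So your argument works for any $g$ fixing $h_4$. It needs neither the generator check nor the identification of $\Aut(S,h_4)$ with the group generated by the listed matrices. Your generator check is nevertheless correct: the listed matrices do permute the eight forms $f_{mn}$ up to scalars. That permutation action is needed anyway in the proof of Theorem~\ref{intersection}.
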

\begin{proof}
    $\Aut(S,h_4)$ acts on the set of lines $A\defeq\{\ell_{ij}\mid 1\leq i,j\leq 4 \}$ by permutation.
    
    Take an element $g\in\Aut(S,h_4)$. Then $g(C)$ is a sum of $12$ distinct lines in $A$. If, in addition, $g$ is contained in $\Aut(S,h_{24})$, then the intersection numbers with any divisor coincide for $C$ and $g(C)$ since the numerical classes of $C$ and $g(C)$ are the same. 
    
    Suppose that $g(C)$ contains $\ell_{11}$. Then $g(C)$ is the sum of $\ell_{11}$ and $11$ other lines in $A$. The line $\ell_{11}$ intersects transversally with $6$ lines in $A$, i.e. $\ell_{i1}$ ($i=2,3,4$) and $\ell_{1j}$ ($j=2,3,4$), and does not intersect with the rest. Therefore we obtain the inequality
    \[
        (\ell_{11}.g(C))\leq -2+6=4<(\ell_{11}.C), 
    \]
    which is absurd. Hence $g(C)$ does not contain $\ell_{11}$, nor does it contain $\ell_{ii}$ for $i=2,3,4$ by the same argument. It follows that any automorphism $g\in\Aut(S,h_4)\cap\Aut(S,h_{24})$ preserves the set $B\defeq\{\ell_{ii}\mid 1\leq i\leq 4\}$. 

    Conversely, if an element $g\in\Aut(S,h_4)$ maps $B$ into itself, then $g$ also preserves the complement $A\setminus B=\{\ell_{ij}\mid 1\leq i,j\leq,4, i\neq j\}$. Hence $g$ fix the polarization $h_{24}$ which is the class of $\sum_{i\neq j}\ell_{ij}$, and thus $g\in\Aut(S,h_{24})$.
\end{proof}

We are ready to prove Theorem~\ref{intersection}.
\begin{proof}[Proof of Theorem~\ref{intersection}]
    The action of $\Aut(S,h_4)$ on the set of hyperplanes
    \[
    \{(f_{mn}=0)\mid m=0,1, n=1,\ldots,4\}
    \]
    induces a homomorphism of groups $\Aut(S,h_4)\to\mathfrak{S}_8$. Here we identify the hyperplane $(f_{mn}=0)$ with the point $4m+n$. Under this identification, the generators of $\Aut(S,h_4)$ are mapped as follows:
    \begin{align*}
        \begin{pmatrix}
            \gamma&0\\
            0&E
        \end{pmatrix}&\mapsto (2,4,3),
        \begin{pmatrix}
            \delta&0\\
            0&E
        \end{pmatrix}\mapsto (1,2)(3,4), \\
        \begin{pmatrix}
            0&E\\
            E&0
        \end{pmatrix}&\mapsto (1,5)(2,6)(3,8)(4,7),
        \begin{pmatrix}
            \sqrt{-1}E&0\\
            0&E
        \end{pmatrix}\mapsto\id.
    \end{align*}
    
    The stabilizer of the set $B$ is the inverse image of the subgroup of $\mathfrak{S}_8$ that preserves the partition $\{1,5\}\sqcup \{2,6\}\sqcup\{3,7\}\sqcup\{4,8\}$. Therefore $\Aut(S,h_4)\cap\Aut(S,h_{24})$ is generated by
    \[
        \begin{pmatrix}
            \gamma&0\\
            0&\gamma^2
        \end{pmatrix},
        \begin{pmatrix}
            \delta&0\\
            0&\delta
        \end{pmatrix},
        \begin{pmatrix}
            0&E\\
            E&0
        \end{pmatrix},
        \begin{pmatrix}
            \sqrt{-1}E&0\\
            0&E
        \end{pmatrix}
    \]
    and is isomorphic to $C_4\rtimes S_4$, the group of GAP ID [96,187]. The symplectic subgroup $\Aut_s(S,h_4)\cap\Aut_s(S,h_{24})$ is generated by
    \[
        \begin{pmatrix}
            \gamma&0\\
            0&\gamma^2
        \end{pmatrix},
        \begin{pmatrix}
            \delta&0\\
            0&\delta
        \end{pmatrix},
        \begin{pmatrix}
            0&E\\
            \sqrt{-1}E&0
        \end{pmatrix}
    \]
    and is isomorphic to $C_2\times\mathfrak{S}_4$.
\end{proof}

\bibliography{A44arxiv.bib}
\bibliographystyle{plain}

\end{document}